\theoremstyle{plain}
\def\endproof{\hspace*{\fill}\mbox{\ \rule{.1in}{.1in}}\medskip }
\newtheorem{theorem}{Theorem}[section]
\newtheorem{lemma}[theorem]{Lemma}
\newtheorem{definition}[theorem]{Definition}
\theoremstyle{definition}
\numberwithin{equation}{section}
\newcommand{\R}{\mathbb{R}}
\begin{document}

\title[Double-obstacle problem and differential games]
{Discrete approximations to the double-obstacle problem, and
  optimal stopping of  tug-of-war games}
\author{Luca Codenotti}
\author{Marta Lewicka}
\author{Juan Manfredi}
\address{University of Pittsburgh, Department of Mathematics,
301 Thackeray Hall, Pittsburgh, PA 15260, USA }
\email{luc23@pitt.edu, lewicka@pitt.edu, manfredi@pitt.edu }

\date{November 1, 2015}

\begin{abstract}
We study the double-obstacle problem for the $p$-Laplace operator,
$p\in [2, \infty)$. We prove that for Lipschitz boundary data and Lipschitz obstacles,
viscosity solutions are unique and coincide with  variational
solutions. They are also uniform limits of solutions to discrete
min-max problems that can be interpreted as the dynamic programming
principle for appropriate tug-of-war games with noise. In these games,
both players in addition to choosing their strategies, are also
allowed to choose stopping times. The solutions to the
double-obstacle problems are limits of values of these games, 
when the step-size controlling the single shift in the
token's position, converges to $0$. We propose a numerical scheme
based on this observation and show how it works for some 
examples of obstacles and boundary data.
\end{abstract}

\maketitle

\section{Introduction}

The purpose of this paper is to study the double obstacle problem for
the $p$-Laplace operator:
\begin{equation}\label{plap}
-\Delta_p u = -\mbox{div}\big(|\nabla u|^{p-2} \nabla u\big), \qquad p\in [2, \infty).
\end{equation}
Let $\Omega\subset\R^N$ be an open, bounded domain with Lipschitz
boundary and let $F:\partial\Omega\rightarrow\R$  be a Lipschitz
continuous boundary datum. Given are bounded and Lipschitz functions
$\Psi_1, \Psi_2:\R^N\rightarrow\R$  such that $\Psi_1 \leq \Psi_2$ in $\bar{\Omega}$ and
$\Psi_1\leq F \leq \Psi_2$ on $\partial\Omega$. We interpret $\Psi_1$
and $\Psi_2$ as the lower and upper obstacles, respectively, and
consider the following double-obstacle problem:
\begin{equation}\label{obst}
\begin{cases}
-\Delta_pu\geq 0 & \mbox{ in }\{x\in\Omega; ~ u(x) < \Psi_2(x)\}\\
-\Delta_pu\leq 0 & \mbox{ in }\{x\in\Omega; ~ u(x) > \Psi_1(x)\}\\
\Psi_1\leq u \leq\Psi_2 & \mbox{ in }\Omega\\
u=F & \mbox{ on }\partial\Omega.
\end{cases}
\end{equation}

Note that under the third condition in (\ref{obst}), the first two
conditions are jointly equivalent to:
$$\min\Big\{\Psi_2 - u, \max\big\{\Delta_p u, \Psi_1 - u\big\}\Big\} =0.$$
That is, when $u$ does not coincide with $\Psi_1$ we require it to be a subsolution, and likewise it must
be a supersolution when it does not coincide with $\Psi_2$. In
particular, $u$ must actually be $p$-harmonic outside of the contact sets with both obstacles:
$$-\Delta_p u = 0 \quad \mbox{ in } \{x\in\Omega; ~ \Psi_1(x) < u(x) < \Psi_2(x)\}.$$

\medskip

\begin{definition} \label{viscsol}
We say that a continuous function $u:\bar\Omega\to\R$ is a viscosity
solution of the double-obstacle problem (\ref{obst}), when:
\begin{itemize}
\item[{(i)}] $u=F$ on $\partial\Omega$ and $\Psi_1\leq u\leq \Psi_2$ in
  $\Omega$.
\item[{(ii)}] For every $x_0\in\Omega$ such that $u(x_0) > \Psi_1(x_0)$
  and every $\phi \in \mathcal{C}^2(\Omega)$ such that:
$$\phi(x_0)= u(x_0), \quad \phi>u \mbox{ in } {\Omega}\setminus\{x_0\}, \quad \nabla\phi(x_0)\neq 0$$
there holds: $\Delta_p \phi(x_0)\geq 0$.
\item[{(iii)}] For every $x_0\in\Omega$ such that $u(x_0) < \Psi_2(x_0)$
  and every $\phi \in \mathcal{C}^2(\Omega)$ such that:
$$\phi(x_0)= u(x_0), \quad \phi<u \mbox{ in } {\Omega}\setminus\{x_0\}, \quad \nabla\phi(x_0)\neq 0$$
there holds: $\Delta_p \phi(x_0)\leq 0$.
\end{itemize}
\end{definition}

\medskip

Our first result concerns existence and uniqueness of
solutions to the min-max problem that, as we shall see, can serve as
a uniform approximation of the original problem (\ref{obst}) in the
sense that its solutions converge uniformly to the viscosity solution
of Definition \ref{viscsol}.
Let $0<\bar\epsilon_0\ll 1$ be a small constant and define the sets:
$$\Gamma=\{x\in\R^N\setminus\Omega;~ \mbox{dist}(x,\Omega)<\bar\epsilon_0\},\qquad
X=\Gamma\cup\Omega.$$

\begin{theorem}\label{epsilonp}
Let $\alpha\in [0,1)$ and $\beta = 1-\alpha$. Let $\Psi_1, \Psi_2:\R^N\rightarrow\R$
and $F:{\Gamma}\rightarrow\R$ be bounded Borel functions such that $\Psi_1 \leq \Psi_2$ in $X$ and
$\Psi_1 \leq F \leq \Psi_2$ in $\Gamma$. Then, for every $\epsilon<\bar\epsilon_0$, 
there exists a unique Borel function $u_\epsilon: X\to \R$ which satisfies: 
\begin{equation}\label{minmax}
u_\epsilon(x) = 
\begin{cases}
\max\Bigg\{\Psi_1(x),\min\Big\{\displaystyle{\Psi_2(x),\frac{\alpha}{2}\sup_{B_\epsilon(x)}
u_\epsilon + \frac{\alpha}{2}\inf_{B_\epsilon(x)} u_\epsilon +
\beta\fint_{B_\epsilon(x)}u_\epsilon}\Big\}\Bigg\} &   \mbox{for }  x\in\Omega, \\
F(x) & \mbox{for }  x\in\Gamma.
\end{cases}
\end{equation}
\end{theorem}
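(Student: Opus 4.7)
The right-hand side of (\ref{minmax}) defines a monotone operator $T$ on the bounded Borel functions $u\colon X\to\R$. The maps $x\mapsto\sup_{B_\epsilon(x)}u$, $x\mapsto\inf_{B_\epsilon(x)}u$ and $x\mapsto\fint_{B_\epsilon(x)}u$ are respectively lower semicontinuous, upper semicontinuous and continuous, so $Tu$ is Borel. The outer truncation makes the order interval $\mathcal I:=\{v\colon\Psi_1\le v\le\Psi_2\text{ in }\Omega,\ v=F\text{ on }\Gamma\}$ forward-invariant under $T$. Since $T$ is only $1$-Lipschitz in $L^\infty(X)$, Banach's theorem does not apply directly; I plan to extract a genuine contraction from the positivity $\beta=1-\alpha>0$ of the averaging weight, combined with the thick boundary layer $\Gamma$.

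\textbf{Key one-step estimate.} The scalar truncation $s\mapsto\max\{\Psi_1(x),\min\{\Psi_2(x),s\}\}$ is nondecreasing and $1$-Lipschitz, so whenever $u_1\le u_2$ lie in $\mathcal I$, writing $w:=u_2-u_1\ge 0$,
\begin{equation*}
 (Tu_2-Tu_1)(x) \le \tfrac{\alpha}{2}\bigl(\sup_{B_\epsilon(x)} u_2 - \sup_{B_\epsilon(x)} u_1\bigr) + \tfrac{\alpha}{2}\bigl(\inf_{B_\epsilon(x)} u_2 - \inf_{B_\epsilon(x)} u_1\bigr) + \beta\fint_{B_\epsilon(x)} w .
\end{equation*}
Each of the three nonnegative terms on the right is bounded by $\sup_{B_\epsilon(x)}w$; crucially, when $B_\epsilon(x)$ meets $\Gamma$ in positive measure the averaging term is at most $\beta(1-c(x))\sup_X w$ for some $c(x)>0$, since $w\equiv 0$ on $\Gamma$, whence $(Tu_2-Tu_1)(x)\le(1-\beta c(x))\sup_X w$ for $x$ within distance $\epsilon$ of $\Gamma$.

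\textbf{Existence by sandwich.} Iterate from below at $u^{(0)}:=\Psi_1\chi_\Omega+F\chi_\Gamma$ and from above at $v^{(0)}:=\Psi_2\chi_\Omega+F\chi_\Gamma$, setting $u^{(n+1)}:=Tu^{(n)}$ and $v^{(n+1)}:=Tv^{(n)}$. Monotonicity of $T$ produces a non-decreasing sequence $(u^{(n)})$ and a non-increasing sequence $(v^{(n)})$, both in $\mathcal I$ and ordered $u^{(n)}\le v^{(n)}$. Stratifying $\Omega$ into $K\approx \operatorname{diam}(\Omega)/\epsilon$ concentric annular layers indexed by distance to $\Gamma$, and applying the key estimate recursively (each new layer borrows contraction from the adjacent, already-improved layer via the averaging term) yields $\sup_X(v^{(n+K)}-u^{(n+K)})\le\gamma\sup_X(v^{(n)}-u^{(n)})$ for some $\gamma<1$ independent of $n$. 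Hence $v^{(n)}-u^{(n)}\to 0$ uniformly, and the common Borel limit $u_\epsilon:=\lim u^{(n)}=\lim v^{(n)}$ is a fixed point of $T$, because uniform convergence permits passage to the limit in each of $\sup$, $\inf$ and $\fint$.

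\textbf{Uniqueness and main obstacle.} For two solutions $u,v\in\mathcal I$, the unordered analogue $|Tu-Tv|(x)\le\alpha\sup_{B_\epsilon(x)}|u-v|+\beta\fint_{B_\epsilon(x)}|u-v|$, combined with the same layerwise stratification, gives $\sup_X|u-v|\le(1-\gamma_0)\sup_X|u-v|$ with $\gamma_0>0$, forcing $u\equiv v$. I expect the main technical hurdle to be this quantitative layer propagation: one must verify that the strict near-boundary improvement accumulates through the $K$ concentric layers into a true contraction with $\gamma<1$ independent of $n$, and check that the obstacle case analysis (when the truncation is active at interior points) does not obstruct the estimate---which it cannot, because the truncation is globally nondecreasing and $1$-Lipschitz.
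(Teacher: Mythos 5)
Your argument is correct in substance but takes a genuinely different route from the paper's in both halves. For existence the paper runs a \emph{single} monotone iteration from below, $u_0=\chi_\Gamma F+\chi_\Omega\Psi_1$, $u_{n+1}=Tu_n$, and upgrades pointwise to uniform convergence by a soft contradiction argument: if $M=\lim_n\sup_X(u-u_n)>0$, the monotone convergence theorem makes $\beta\fint_{B_\epsilon(x)}(u-u_n)<\delta$ for large $n$ uniformly in $x$, and after checking that neither truncation is active at a near-maximizing point one obtains $M<\alpha M+(\alpha+3)\delta$, impossible since $\alpha<1$. For uniqueness the paper shows that the set $\{u-\bar u=M\}$ must have full measure in successive $\epsilon$-balls and so propagates out to $\Gamma$ --- a qualitative density argument. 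Your two-sided sandwich plus layerwise contraction replaces both steps by one quantitative mechanism; it buys an explicit geometric rate $\gamma^{n/K}$ for the iteration (of interest for the numerics of Section 6) and avoids the paper's case analysis of when the obstacles are active, at the price of some boundary-layer geometry. Two details of that geometry need care. First, the fraction $c(x)=|B_\epsilon(x)\cap\Gamma|/|B_\epsilon|$ degenerates as $\mathrm{dist}(x,\partial\Omega)\uparrow\epsilon$, so the layers should have width $\epsilon/2$ (say) to keep a uniform $c_0>0$. Second, the terms $\tfrac{\alpha}{2}\bigl(\sup_{B_\epsilon(x)}u_2-\sup_{B_\epsilon(x)}u_1\bigr)$ and the analogous $\inf$ difference reach into deeper, not-yet-improved layers and contribute the full $\sup_X w$; the gain on layer $j$ therefore comes only through the average and equals $\delta_j=\beta c_0\,\delta_{j-1}$, geometrically small in $j$ but strictly positive precisely because $\beta>0$ weights the mean. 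One must also check inductively that the improvement already gained on earlier layers persists under further iteration, which it does because $w^{(n)}=v^{(n)}-u^{(n)}$ is pointwise non-increasing. With these points verified your $\gamma<1$ exists and the proof closes.
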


We now state our main result:

\begin{theorem} \label{main}
Let $p\in [2, \infty)$ and define: 
$$\displaystyle{\alpha=\frac{p-2}{p+N}}, \qquad \displaystyle{\beta=\frac{2+N}{p+N}}.$$  
Let $F, \Psi_1, \Psi_2:\R^N\rightarrow\R$ be bounded Lipschitz continuous
functions such that:
$$\Psi_1 \leq \Psi_2 \quad \mbox{ in } \bar\Omega \qquad \mbox{ and } \qquad
\Psi_1 \leq F \leq \Psi_2 \quad \mbox{ in } \mathbb{R}^N\setminus\Omega.$$
Let $u_\epsilon$ be the unique solution to (\ref{minmax}). Then $\{u_\epsilon\}$ converge, as $\epsilon \to 0$,
uniformly in $\bar\Omega$, to the continuous function $u:\bar\Omega\to\R$ which is the
unique viscosity solution to the double-obstacle problem (\ref{obst}).
\end{theorem}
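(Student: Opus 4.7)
The plan is to follow the classical Arzela--Ascoli / identification / uniqueness template. From (\ref{minmax}) together with $\Psi_1\leq F\leq \Psi_2$ one reads off immediately the uniform $L^\infty$ bound $\inf\Psi_1 \leq u_\epsilon\leq\sup\Psi_2$ on $X$, so the only nontrivial compactness input is equicontinuity of the family $\{u_\epsilon\}_{\epsilon<\bar\epsilon_0}$ on $\bar\Omega$. Granting this, Arzela--Ascoli yields a sequence $\epsilon_k\to 0$ and a continuous limit $u:\bar\Omega\to\R$ with $u_{\epsilon_k}\to u$ uniformly, $u=F$ on $\partial\Omega$, and $\Psi_1\leq u\leq\Psi_2$ in $\Omega$. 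Identifying $u$ as a viscosity solution of (\ref{obst}) and establishing uniqueness of such solutions then upgrades subsequential convergence to convergence of the whole family.

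For equicontinuity I would exploit the game-theoretic interpretation advertised in the abstract: $u_\epsilon(x)$ is the value of a tug-of-war with noise and optional stopping in which, at each round, Player I (maximizer) may stop and collect $\Psi_1$, Player II (minimizer) may stop and collect $\Psi_2$, otherwise an $\tfrac{\alpha}{2}$-sup, $\tfrac{\alpha}{2}$-inf, $\beta$-average step is performed in $B_\epsilon(x)$, and a shift into $\Gamma$ pays $F$. For $x,y\in\bar\Omega$, I would couple two copies of the game on a common probability space, aligning noise and player moves so that trajectories differ by the offset $y-x$ until either exits $\Omega$, and derive
$$|u_\epsilon(x)-u_\epsilon(y)| \leq C\,\max\{\mathrm{Lip}(F),\mathrm{Lip}(\Psi_1),\mathrm{Lip}(\Psi_2)\}\,|x-y|+C\epsilon.$$
Exit-time control near $\partial\Omega$ follows from an exterior cone barrier, standard for Lipschitz boundaries, while the interior stopping options introduced by the two obstacles are handled by letting the tracking player stop whenever their opponent does.

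To verify item (ii) of Definition \ref{viscsol}, fix $x_0\in\Omega$ with $u(x_0)>\Psi_1(x_0)$ and $\phi\in\mathcal{C}^2$ touching $u$ strictly from above at $x_0$ with $\nabla\phi(x_0)\neq 0$. Local maximizers $x_{\epsilon_k}$ of $u_{\epsilon_k}-\phi$ converge to $x_0$, and by uniform convergence $u_{\epsilon_k}(x_{\epsilon_k})>\Psi_1(x_{\epsilon_k})$ for large $k$, so the outer $\max$ in (\ref{minmax}) is inactive at $x_{\epsilon_k}$; whether or not the inner $\min$ with $\Psi_2$ binds, one has
$$u_{\epsilon_k}(x_{\epsilon_k})\leq \frac{\alpha}{2}\sup_{B_{\epsilon_k}(x_{\epsilon_k})}u_{\epsilon_k}+\frac{\alpha}{2}\inf_{B_{\epsilon_k}(x_{\epsilon_k})}u_{\epsilon_k}+\beta\fint_{B_{\epsilon_k}(x_{\epsilon_k})}u_{\epsilon_k}.$$
Replacing $u_{\epsilon_k}(y)$ on the right by $\phi(y)+\big(u_{\epsilon_k}(x_{\epsilon_k})-\phi(x_{\epsilon_k})\big)$ (valid on $B_{\epsilon_k}(x_{\epsilon_k})$ since $x_{\epsilon_k}$ maximizes $u_{\epsilon_k}-\phi$) and invoking the asymptotic expansion
$$\frac{\alpha}{2}\sup_{B_\epsilon(x)}\phi+\frac{\alpha}{2}\inf_{B_\epsilon(x)}\phi+\beta\fint_{B_\epsilon(x)}\phi=\phi(x)+\frac{\epsilon^2}{2(p+N)}|\nabla\phi(x)|^{2-p}\Delta_p\phi(x)+o(\epsilon^2),$$
which is pinned to the precise $\alpha,\beta$ of Theorem \ref{main} and valid where $\nabla\phi\neq 0$, I divide by $\epsilon_k^2$ and pass to the limit to obtain $\Delta_p\phi(x_0)\geq 0$. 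Item (iii) is symmetric.

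Uniqueness of viscosity solutions to (\ref{obst}) is obtained from a comparison principle via doubling of variables: for a subsolution $v$ and a supersolution $w$ with $v\leq w$ on $\partial\Omega$, one examines the maximum of $v(x)-w(y)-|x-y|^2/(2\delta)$ on $\bar\Omega\times\bar\Omega$. Either the contact-set hypotheses $v>\Psi_1$ and $w<\Psi_2$ hold at the maximizer and one recovers the standard $p$-Laplacian comparison inequality (after an Ishii--Lions perturbation ensuring non-vanishing gradients, exploiting $p\geq 2$), or one of them fails, in which case the direct obstacle sandwich $v\leq \Psi_2\leq w$ or $v\leq\Psi_1\leq w$ closes the argument. \textbf{Main obstacle.} I expect the principal difficulty to lie in equicontinuity: the min-max operator in (\ref{minmax}) is fully nonlinear and non-variational, so energy methods are unavailable, and the coupling construction must accommodate the two interior stopping options, which may create patterns of breakdown absent in the obstacle-free tug-of-war literature. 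A secondary subtlety is to ensure that the perturbation needed in the comparison principle is compatible with the double-obstacle viscosity conditions at contact points.
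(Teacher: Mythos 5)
Your overall template (uniform bound, equicontinuity, Arzel\`a--Ascoli, identification of the limit as a viscosity solution via the asymptotic mean-value expansion, then uniqueness) matches the paper's architecture, and your identification step for items (ii)--(iii) of Definition \ref{viscsol} is essentially the correct computation (it is the local argument of \cite{LM} specialized to the active single obstacle). However, there is a genuine gap in your uniqueness step. You propose a doubling-of-variables comparison with an ``Ishii--Lions perturbation ensuring non-vanishing gradients.'' For the divergence-form $p$-Laplacian this is precisely the step that does not go through by standard means: the operator degenerates where $\nabla u=0$, Definition \ref{viscsol} only tests with $\nabla\phi(x_0)\neq 0$, and at the doubled maximum the quadratic penalization can produce a vanishing gradient (e.g.\ $\hat x=\hat y$), so the usual jet/theorem-on-sums machinery does not directly apply. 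This is exactly why \cite{JLM} establishes uniqueness by proving that viscosity super/subsolutions are $p$-super/subharmonic (hence weak solutions), rather than by a direct viscosity comparison. The paper follows that route: in Theorem \ref{thuni} it shows a viscosity solution is $W^{1,p}_{loc}$ on $\{\Psi_1\neq\Psi_2\}$ via \cite{JLM} and \cite{L}, that it is the variational minimizer there (the splitting $\phi=\phi^++\phi^-$ into admissible one-sided perturbations), and then invokes Farnana's comparison principle for variational double-obstacle solutions, after excising a neighborhood of the coincidence set $\{\Psi_1=\Psi_2\}\cup\partial\Omega$ where any two solutions trivially agree up to $\epsilon$. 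Your obstacle-case dichotomy at the maximizer is fine, but the PDE case is not ``standard'' and needs the weak-solution detour; without it the uniqueness claim, and hence the convergence of the full family $\{u_\epsilon\}$, is not established.

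On equicontinuity you also take a different, and substantially harder, route than the paper. A direct coupling/barrier/exit-time analysis is what \cite{MPR, LM} carry out for the no-obstacle and single-obstacle games, and it is delicate near a Lipschitz boundary; your sketch asserts the key boundary estimate rather than proving it, and the Lipschitz bound $C\,\mathrm{Lip}\cdot|x-y|+C\epsilon$ you claim is stronger than what is needed or known. The paper deliberately bypasses this: it sandwiches $u_\epsilon$ between the solutions $\underline u_\epsilon$ and $\bar u_\epsilon$ obtained by replacing one obstacle with a constant ($\underline\Psi_1\equiv\inf\Psi_1$, resp.\ $\bar\Psi_2\equiv\sup\Psi_2$), which reduces each to a \emph{single}-obstacle scheme whose oscillation estimate near $\partial\Omega$ is already available from \cite[Corollary 4.5]{LM}; the comparison Lemma \ref{comparison} gives $\underline u_\epsilon\leq u_\epsilon\leq\bar u_\epsilon$ and hence the boundary estimate for $u_\epsilon$, which is then propagated to the interior by a translation-plus-comparison argument (your ``coupling'' in deterministic disguise). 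If you want a self-contained proof your route could in principle work, but as written the boundary estimate is the missing ingredient; the sandwich trick is the efficient way to import it.
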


Clearly, the above limit $u$ depends only on the values of $F$ on $\partial\Omega$ 
and values of $\Psi_1, \Psi_2$ in $\bar\Omega$, and therefore
any Lipschitz continuous extensions of $F, \Psi_1, \Psi_2$ on $\R^N$
(which exist by virtue of Kirszbraun's extension theorem) give the same limit.

\bigskip

Theorem \ref{epsilonp} and Theorem \ref{main} will be proved in
sections \ref{sec2} and \ref{sec3}, whereas uniqueness of
viscosity solutions to (\ref{obst}) will be proved in section
\ref{secuni}. In section \ref{sec4} we show that (\ref{minmax}) can be seen as the dynamic programming
principle for a stochastic-deterministic tug-of-war game, where the
two players are allowed to choose their strategies as well as
stopping times. The connection between tug-of-war games and the
nonlinear operator $\Delta_p$ stems from the fact that, for a
sufficiently regular $u$ one can express its $p$-Laplacian as a
combination the $\infty$-Laplacian and the ordinary Laplacian:
\begin{equation*}
\Delta_p u (x)= |\nabla u|^{p-2}\big((p-2)\Delta_\infty u(x) + \Delta u (x)\big),
\end{equation*}
where:
\begin{equation*}
\Delta_\infty u(x) = \Big\langle \nabla^2 u(x) \frac{\nabla u(x)}{|\nabla
u(x)|}, \frac{\nabla u(x)}{|\nabla u(x)|}\Big\rangle. 
\end{equation*}
The tug-of-war interpretation of the $\infty$-Laplacian has been
developed in the fundamental paper \cite{PSSW}, while it is well known
that the values of the discrete Brownian motion converge to a harmonic
function. Thus, an appropriate \lq\lq mixture\rq\rq of the two processes (via the parameters
$\alpha$ and $\beta$) yields $p$-harmonic functions in the limit 
as the discrete step-size $\epsilon\to 0$.

The single obstacle problem for $\Delta_\infty$ has been studied, from
this point of view, in \cite{MRS}. The case $p\in [2,\infty)$, still
in presence of the single obstacle, has been derived in \cite{LM}.
Let us also note that existence, uniqueness and regularity of solutions to the
double-obstacle problem for $\Delta_\infty$ in the domain $\Omega=\R^N$ have been achieved, under
additional assumptions on the Lipschitz obstacles $\Psi_1,
\Psi_2$, in \cite[Theorems 5.1 and 5.2]{BCF} using  barrier
methods. In the same paper, the authors give a heuristic connection to a general non-local
variant of the tug-of-war game. \par

The existence and uniqueness of solutions to double obstacle problems
for convex functionals follows from convex analysis in a standard
way. Questions of regularity of solutions, interior and at the
boundary, have been studied in \cite{DMV} for the linear case and in
\cite{KZ} for the quasilinear case.  Let us point out that there is a
monotonicity property that holds naturally in the single obstacle
problem, namely the solution can be expressed as a supremum of
sub-solutions (or a infimum of super-solutions), that does not hold
in the double obstacle case.  Certain aspects of the regularity proof
in \cite{KZ} are very different in the double obstacle case from the
parallel argument in the single obstacle case. Similarly, our arguments are based on,  but
quite different in the details from,  the arguments in the single obstacle
case \cite{LM}.  In particular, we follow the modern exposition of 
Farnana \cite{farn} for the classical variational theory, which is
valid in general metric measure spaces, and prove that \lq\lq
viscosity = weak\rq\rq\ for double obstacle problems in section
\ref{secuni}.

Finally, in section \ref{secnum} we present examples of numerical
calculations using an   algorithm based on Theorem \ref{epsilonp}  and
Theorem \ref{main}. A numerical algorithm for solving the
double-obstacle problem has been 
proposed in \cite{WC}, where the coincidence set is approximated by
consecutive iterations.  A different algorithm,
taking advantage of the parabolic pde: $u_t - \Delta_2u = 0$ has been
indicated in \cite{RM}. Finite difference methods for the $\infty$
and $p$-laplacian were considered in \cite{Ober}.

\bigskip

\noindent{\bf Acknowledgments.}
M.L. was partially supported by  NSF award DMS-1406730. 

\section{The discrete approximation: a proof of Theorem \ref{epsilonp}}\label{sec2}

The proof relies on Perron's method and it is the same as in
\cite{LPS, LM}.

\smallskip

{\bf 1.} For any bounded Borel function $v:X\rightarrow\R$ we set:
\begin{equation}\label{2.2}
Tv(x) = 
\begin{cases}
\max\Big\{\Psi_1(x),\min\big\{\displaystyle{\Psi_2(x),\frac{\alpha}{2}\sup\limits_{B_\epsilon(x)}
v + \frac{\alpha}{2}\inf\limits_{B_\epsilon(x)} v +
\beta\fint_{B_\epsilon(x)}v}\big\}\Big\} &\mbox{for } x\in\Omega, \\
F(x) &\mbox{for } x\in\Gamma. 
\end{cases}
\end{equation}
It is easy to see that if $v\leq w$ in  $X$ then $Tv \leq Tw$.
Define recursively the sequence of Borel functions $\{u_n\}_{n=1}^\infty$ by:
$$u_0=\chi_{\Gamma}F + \chi_{\Omega}\Psi_1 \qquad \mbox{ and } \qquad
u_{n+1}=Tu_n \quad \forall n\geq 0.$$
We note that $u_0 \leq u_1$, as by construction $\Psi_1\leq T\Psi_1$
in $\Omega$. Consequently, $\{u_n\}$ is pointwise non-decreasing. On
the other hand, it follows from (\ref{minmax}) that $\Psi_1\leq
u_n\leq \Psi_2$ in $\Omega$  and $F =u_n$ in $\Gamma$. Thus, $\{u_n\}$ 
pointwise converges to a Borel function $u:X\to\R$ satisfying:
$$\Psi_1 \leq u \leq \Psi_2 \quad \mbox{ in } \Omega \qquad \mbox{
  and } \qquad u=F \quad
\mbox{ in } \Gamma.$$
 
\smallskip

{\bf 2.} We now show that $\{u_n\}$ converges to $u$ uniformly in
$X$. Assume by contradiction that:
$$M=\lim\limits_{n\rightarrow\infty}\sup_{X}~(u-u_n)>0.$$  
Fix a small parameter $\delta>0$ and take $n>1$ such that: 
\begin{equation*}
\sup_{X}~ (u-u_n) < M+\delta \qquad \mbox{and} \qquad
\forall x\in \Omega \quad 
\beta\fint_{B_\epsilon(x)}(u-u_n)\leq\frac{\beta}{|B_\epsilon(x)|}\int_{X}(u-u_n)<\delta,
\end{equation*}
where the monotone convergence theorem guarantees validity of  the second condition above.

Let $x_0\in \Omega$ satisfy: $u(x_0)-u_{n+1}(x_0)>M-\delta>0$. Note
that if $u(x_0)=\Psi_1(x_0)$, then it must be 
$u_n(x_0)=\Psi_1(x_0)=u(x_0)$ for all $n$. Similarly, if $u_{n+1}(x_0)=\Psi_2(x_0)$, it must
be $u_m(x_0)=\Psi_2(x_0)=u(x_0)$ for all $m>n+1$. Therefore:
\begin{equation}\label{pomoc}
\Psi_1(x_0)<u(x_0) \quad \mbox{ and } \quad u_{n+1}(x_0)<\Psi_2(x_0).
\end{equation}
Choose $m>n$ such that $u_{m+1}(x_0)-u_{n+1}(x_0)>M-2\delta$ and $u_{m}(x_0)>\Psi_1(x_0)$. 
We  now compute: 
\begin{equation}\label{ineqCont1}
\begin{split}
M-2\delta &<u_{m+1}(x_0)-u_{n+1}(x_0) \\ & \leq
\frac{\alpha}{2}\big(\sup_{B_\epsilon(x_0)}u_{m}-\sup_{B_\epsilon(x_0)}u_{n}\big)
+ \frac{\alpha}{2}\big(\inf_{B_\epsilon(x_0)}u_{m}-\inf_{B_\epsilon(x_0)}u_{n}\big)
+ \beta\fint_{B_\epsilon(x_0)}(u_m-u_n)\\ 
& \leq\alpha \sup_{B_\epsilon(x_0)}(u_m-u_n)+
\beta\fint_{B_\epsilon(x_0)}(u_m-u_n) \leq\alpha \sup_{B_\epsilon(x_0)}(u-u_n)+ 
\beta\fint_{B_\epsilon(x_0)}(u-u_n)\\ & < \alpha(M+\delta)+\delta,
\end{split}
\end{equation}
where in the second inequality we used (\ref{2.2}) and (\ref{pomoc}),
while for the third inequality we noted that both quantities:
$\sup_{B_\epsilon(x_0)}u_{m}-\sup_{B_\epsilon(x_0)}u_{n}$ and:
$\inf_{B_\epsilon(x_0)}u_{m}-\inf_{B_\epsilon(x_0)}u_{n}$, are not
larger than: $\sup_{B_\epsilon(x_0)}(u_m-u_n)$. 

It follows that $M<\alpha M+(\alpha+3)\delta$, which is a
contradiction with $M>0$ for $\delta$ sufficiently small, in view of
$\alpha<1$. Therefore, the convergence of $\{u_n\}$ to $u$ is uniform
and we have: $u=\lim_{n\to\infty} u_{n+1} = \lim_{n\to\infty}
Tu_n=T(\lim_{n\to\infty} u_n)=Tu$, which concludes the proof of
existence. 

\smallskip

{\bf 3.} We now prove uniqueness of solutions to \eqref{minmax}. Assume, by
contradiction, that $u$ and $\bar{u}$ are distinct solutions and denote: 
$$M=\sup_{\Omega} ~(u-\bar{u})>0.$$
Let $\{x_n\}_{n=1}^\infty$ be a sequence of points in $X$ such that
$\lim_{n\to\infty}(u-\bar{u})(x_n)=M$. Observe that $u(x_n)>\Psi_1(x_0)$ and
$\bar{u}(x_n)<\Psi_2(x_n)$ for large $n$. Without loss of generality,
$\{x_n\}$ converges to some $x_0\in\bar\Omega$. Therefore, as in \eqref{ineqCont1}, we get: 
\begin{equation*}
\begin{split}
(u-\bar{u})(x_n)&\leq
\frac{\alpha}{2}\Big(\sup_{B_\epsilon(x_n)}u-\sup_{B_\epsilon(x_n)}\bar{u}\Big)
+ \frac{\alpha}{2}\Big(\inf_{B_\epsilon(x_n)}u-\inf_{B_\epsilon(x_n)}\bar{u}\Big)
+ \beta\fint_{B_\epsilon(x_n)}(u-\bar{u})\\ 
&\leq\alpha \sup_{B_\epsilon(x_n)}\left(u-\bar{u}\right) + \beta\fint_{B_\epsilon(x_n)}(u-\bar{u})
\leq\alpha M + \beta\fint_{B_\epsilon(x_n)}(u-\bar{u}).
\end{split}
\end{equation*}
Passing to the limit with $n\to \infty$ yields: $M\leq\alpha M + \beta\fint_{B_\epsilon(x_0)}(u-\bar{u})$,
and thus: $M\leq\fint_{B_\epsilon(x_0)} (u-\bar u)$ in view of
$\beta>0$. The set $G=\left\{x\in X;~(u-\bar{u})(x)=M \right\}$
must therefore be dense in $B_\epsilon(x_0)$. By the
same argument we conclude that for  all $x\in G\cap\Omega$, the set
$B_{\epsilon}(x)\setminus G$ has measure $0$.  
After finitely many steps of such reasoning, we obtain a contradiction
with $u=\bar u =F$ in $\Gamma$.
\endproof

\bigskip

Finally, we note the following comparison principle, in view of the iteration
procedure (\ref{2.2})  for the unique solution to (\ref{minmax}):

\begin{lemma}\label{comparison}
Let $\alpha$ and $\beta$ be as in Theorem \ref{epsilonp}. Let
$u_\epsilon$ be the unique solution to (\ref{minmax}) with the data $F,
\Psi_1, \Psi_2$, while $\tilde u_\epsilon $ be the unique solution to
(\ref{minmax}) with the data $\tilde F, \tilde\Psi_1,
\tilde\Psi_2$. Assume that:
$$ F\leq \tilde F \quad \mbox{ in } \Gamma \qquad \mbox{ and }\qquad 
\Psi_1\leq \tilde \Psi_1, \quad \Psi_2\leq \tilde \Psi_2 \quad \mbox{ in } X.$$
Then: $u_\epsilon \leq \tilde u_\epsilon $ in $X$.
\end{lemma}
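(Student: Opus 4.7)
The plan is to leverage the monotone iteration scheme from step 1 of the proof of Theorem \ref{epsilonp}. Let $T$ denote the operator in (\ref{2.2}) associated with the data $(F, \Psi_1, \Psi_2)$, and let $\tilde T$ denote the analogous operator associated with $(\tilde F, \tilde \Psi_1, \tilde \Psi_2)$. The proof already observed that $T$ is monotone in its argument: $v\leq w$ implies $Tv\leq Tw$, and the same holds for $\tilde T$.

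The key additional observation I would record is that $T$ is also monotone in the data. Indeed, the map $(a,b,t)\mapsto \max\{a,\min\{b,t\}\}$ is nondecreasing in each of $a$, $b$ and $t$ separately; since $\Psi_1\leq \tilde\Psi_1$, $\Psi_2\leq\tilde\Psi_2$ in $\Omega$ and $F\leq\tilde F$ in $\Gamma$, this yields $Tv\leq \tilde T v$ pointwise in $X$, for every bounded Borel function $v$.

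Now I would run the iteration from the existence proof in parallel. Define
\[
u_0 = \chi_\Gamma F + \chi_\Omega \Psi_1, \qquad \tilde u_0 = \chi_\Gamma \tilde F + \chi_\Omega \tilde\Psi_1,
\]
and set $u_{n+1} = Tu_n$, $\tilde u_{n+1} = \tilde T \tilde u_n$. By hypothesis $u_0 \leq \tilde u_0$ in $X$. Assuming $u_n \leq \tilde u_n$, the two monotonicity properties combine to give
\[
u_{n+1} = T u_n \leq T \tilde u_n \leq \tilde T \tilde u_n = \tilde u_{n+1}.
\]
Step 2 of the proof of Theorem \ref{epsilonp} established that $u_n \to u_\epsilon$ and $\tilde u_n \to \tilde u_\epsilon$ uniformly on $X$, so passing to the limit in the inductive inequality yields $u_\epsilon \leq \tilde u_\epsilon$ in $X$.

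There is no real obstacle here: the entire content of the lemma is the monotonicity of $T$ in its data, which is a direct consequence of the monotonicity of $\max$ and $\min$ in each variable. Everything else is a routine application of the iteration scheme already constructed.
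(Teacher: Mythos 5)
Your proof is correct and follows exactly the route the paper intends: the paper gives no written proof, merely asserting the lemma ``in view of the iteration procedure (\ref{2.2})'', and your argument --- monotonicity of $T$ in its argument plus monotonicity in the data $(F,\Psi_1,\Psi_2)$, propagated through the parallel iterations from $u_0\leq\tilde u_0$ and passed to the limit --- is precisely the content being alluded to.
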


\section{The main convergence result: a proof of Theorem \ref{main}}\label{sec3}

\begin{lemma}\label{aa}
The Borel functions $u_\epsilon$ satisfy:
\begin{itemize}
\item[{(i)}] (Uniform boundedness):
$$\exists C>0\quad \forall\epsilon>0\qquad \|u_\epsilon\|_{L^\infty(\bar\Omega)} < C$$
\item[{(ii)}] (Uniformly vanishing discontinuities):
\begin{equation}\label{asc}
\forall \eta>0\quad \exists r_0, \epsilon_0> 0 \qquad
\forall\epsilon<\epsilon_0 \quad \forall x_0, y_0\in\bar{\Omega} 
\qquad |x_0-y_0|<r_0\Rightarrow |u_\epsilon(x_0) - u_\epsilon(y_0)| < \eta.
\end{equation}
\end{itemize}
\end{lemma}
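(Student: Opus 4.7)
Part (i) is immediate. The defining identity (\ref{minmax}) enforces $\Psi_1 \leq u_\epsilon \leq \Psi_2$ on $\Omega$ and $u_\epsilon = F$ on $\Gamma$, so $\|u_\epsilon\|_{L^\infty(X)}\leq \max\{\|F\|_\infty,\|\Psi_1\|_\infty,\|\Psi_2\|_\infty\}$ uniformly in $\epsilon$.

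For part (ii), I plan to follow the two-step pattern used in the obstacle-free and single-obstacle settings \cite{LPS,MRS,LM}. \emph{Boundary step:} for each $z\in\partial\Omega$ and each $\delta>0$, build cone-type super- and sub-solutions $w^\pm(x) = F(z)\pm C(|x-z|+\delta)$ of the operator $T$ in (\ref{2.2}), with $C$ large compared to the Lipschitz constants of $F,\Psi_1,\Psi_2$ and to the local Lipschitz geometry of $\partial\Omega$ at $z$. Thanks to the compatibility $\Psi_1(z)\leq F(z)\leq \Psi_2(z)$ and Lipschitz continuity of the data, $w^\pm$ stays strictly inside the corridor $[\Psi_1,\Psi_2]$ on a small neighborhood $U_z$ of $z$, which renders the $\max/\min$ truncations in $T$ trivial there and reduces the verification of $Tw^+\leq w^+$, $Tw^-\geq w^-$ to the obstacle-free mean-value computation for $\epsilon$ small. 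Feeding $w^\pm$ into Lemma \ref{comparison} sandwiches $u_\epsilon$ on $U_z$ between them, yielding $|u_\epsilon(x)-F(z)|<\delta$ for $\epsilon<\epsilon_0(z,\delta)$ and $|x-z|<r(z,\delta)$; compactness of $\partial\Omega$ uniformizes $r,\epsilon_0$ in $z$. \emph{Interior step:} for $x_0,y_0\in\bar\Omega$ with $h:=y_0-x_0$ small and $L$ a common Lipschitz constant of $F,\Psi_1,\Psi_2$, the function $v(x):=u_\epsilon(x-h)+L|h|$ solves the min-max equation on $X+h$ with data shifted by $h$ and raised by $L|h|$, which (by Lipschitz continuity) dominates $(F,\Psi_1,\Psi_2)$ pointwise on the overlap $X\cap(X+h)$. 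Applying Lemma \ref{comparison} on this overlap, while using the boundary step to absorb the thin strip $X\setminus(X+h)$ of width $|h|$ near $\partial\Omega$, yields $u_\epsilon(y_0)\leq u_\epsilon(x_0)+L|h|+o(1)$ as $|h|\to 0$; the reverse inequality follows by symmetry, and together with the boundary modulus one obtains (\ref{asc}).

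The main obstacle will be the barrier construction: unlike in the obstacle-free case, the super/sub-solutions must respect the discrete constraints $\Psi_1\leq \cdot \leq \Psi_2$ baked into $T$. The key observation making the argument go through is that the compatibility $\Psi_1(z)\leq F(z)\leq\Psi_2(z)$ together with Lipschitz continuity of the data lets the cone barriers sit strictly between the obstacles on a small neighborhood, so the $\max/\min$ truncations are inactive and the problem reduces locally to the obstacle-free one already handled in \cite{LPS}. The smallness threshold $\epsilon_0$ is dictated by the condition $\epsilon\ll r$ needed for the discrete mean-value inequalities to hold for the cone barrier at scale $r$.
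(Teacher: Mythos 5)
Part (i) and your interior translation step are fine and essentially match the paper (the paper shifts $u_\epsilon$ by $h=x_0-y_0$, raises it by $\tfrac34\eta$, uses a boundary collar $\tilde\Gamma$ of width $r_0/2$ as artificial boundary data, and invokes Lemma \ref{comparison}). The problem is your boundary step.

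The cone barriers $w^\pm(x)=F(z)\pm C(|x-z|+\delta)$ do not satisfy the discrete comparison inequalities you need, and no choice of large $C$ fixes this. Wherever the $\max/\min$ truncations are inactive, $Tw^+$ reduces to the averaging operator $\mathcal{A}w^+=\frac{\alpha}{2}\sup_{B_\epsilon}w^++\frac{\alpha}{2}\inf_{B_\epsilon}w^++\beta\fint_{B_\epsilon}w^+$. For $|x-z|\ge\epsilon$ the sup/inf part of $|\cdot-z|$ averages exactly to $|x-z|$, while by Jensen (strictly, for $N\ge 2$) $\fint_{B_\epsilon(x)}|y-z|\,dy>|x-z|$; hence $\mathcal{A}w^+>w^+$, i.e.\ the upward cone is a strict \emph{sub}solution, and symmetrically the downward cone is a strict \emph{super}solution --- precisely the wrong directions. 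This is not a technicality: it is the reason the boundary estimate in the $p$-harmonious setting requires comparison with fundamental-solution-type profiles and annulus exit-time/strategy estimates as in \cite{MPR,LM}, which the paper explicitly describes as the delicate part it wants to avoid. A second gap is the localization: Lemma \ref{comparison} compares solutions on all of $X$ with data on $\Gamma$, so to run a barrier argument on a small neighborhood $U_z$ you must dominate $u_\epsilon$ on the artificial boundary $\partial U_z\cap\Omega$, where you only know $|u_\epsilon|\le C$; this forces the barrier to be large at distance $r$ from $z$ and destroys the claim that it sits inside the corridor $[\Psi_1,\Psi_2]$ near $z$.

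The paper's actual boundary argument sidesteps all of this. It defines $\bar u_\epsilon$ as the solution of (\ref{minmax}) with the upper obstacle replaced by the constant $\sup_X\Psi_2$ (so the $\min$ is inactive and $\bar u_\epsilon$ solves the \emph{single} lower-obstacle scheme), and $\underline u_\epsilon$ with the lower obstacle replaced by $\inf_X\Psi_1$ (so $-\underline u_\epsilon$ solves a single lower-obstacle scheme). The already-established boundary regularity for the single-obstacle problem (\cite[Corollary~4.5]{LM}) applies to both, and Lemma \ref{comparison} gives $\underline u_\epsilon\le u_\epsilon\le\bar u_\epsilon$ with all three equal to $F$ on $\Gamma$, which sandwiches $u_\epsilon$ near $\partial\Omega$. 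If you want a self-contained proof along your lines, you would have to import the full barrier/exit-time machinery of \cite{MPR}; otherwise the reduction to the single-obstacle result is the efficient route.
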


\begin{proof}
{\bf 1.} Since for every $\epsilon>0$ we have $\Psi_1\leq
u_\epsilon\leq \Psi_2$ in $\bar\Omega$, it is clear that (i)
holds. Condition (ii) will be proved
by invoking the same result, already established for the approximate
solutions of the single obstacle problem, studied in
\cite{LM}. In fact, proving (\ref{asc}) was the main technical
ingredient in \cite{MPR, LM}, necessitating a careful estimate of the variation
of $u_\epsilon$ close to the boundary $\partial\Omega$. 
It involved designing specific strategies in the game-theoretical
interpretation of the discrete min-max equation (see section \ref{sec4}), comparison with the
fundamental solution under mixed boundary conditions and estimating the
exit time. 

Here, we bypass this direct analysis through the following construction.
Fix $\eta>0$. Let $\bar u_\epsilon$ be the unique solution to
(\ref{minmax}) with the same data $F$ and $\Psi_1$, but with the
new upper obstacle $\bar\Psi_2 \equiv \sup_X \Psi_2$. Since $\bar
u_\epsilon \leq \bar\Psi_2$ and $\bar\Psi_2$ is a constant, it follows that:
\begin{equation}\label{obst1}
\bar u_\epsilon(x) = 
\begin{cases}
\max\Big\{\Psi_1(x), \displaystyle{\frac{\alpha}{2}\sup_{B_\epsilon(x)}
\bar u_\epsilon + \frac{\alpha}{2}\inf_{B_\epsilon(x)} \bar u_\epsilon +
\beta\fint_{B_\epsilon(x)} \bar u_\epsilon}\Big\} &   \mbox{for }  x\in\Omega, \\
F(x) & \mbox{for }  x\in\Gamma,
\end{cases}
\end{equation}
that is $\bar u_\epsilon$ is the unique solution of the approximation
(\ref{obst1}) to the single obstacle problem with data $F$ and
$\Psi_1$. By \cite[Corollary 4.5]{LM} we thus get:
\begin{equation}\label{asc1}
\exists r_0, \epsilon_0> 0 \qquad
\forall\epsilon<\epsilon_0 \quad \forall x_0, y_0\in\bar{\Omega} 
\qquad |x_0-y_0|<r_0\Rightarrow |\bar u_\epsilon(x_0) - \bar u_\epsilon(y_0)| < \eta.
\end{equation}

Likewise, let ${\underline u}_\epsilon$ be the unique solution to
(\ref{minmax}) with the same $F$ and $\Psi_2$ but with a new lower
obstacle ${\underline\Psi}_1 \equiv \inf_X\Psi_1$. Again, since
${\underline u}_\epsilon\geq {\underline \Psi}_1$, we trivially obtain:
\begin{equation}\label{obst2}
{\underline u}_\epsilon(x) = 
\begin{cases}
\min\Big\{\Psi_2(x), \displaystyle{\frac{\alpha}{2}\sup_{B_\epsilon(x)}
{\underline u}_\epsilon + \frac{\alpha}{2}\inf_{B_\epsilon(x)} {\underline u}_\epsilon +
\beta\fint_{B_\epsilon(x)} {\underline u}_\epsilon}\Big\} &   \mbox{for }  x\in\Omega, \\
F(x) & \mbox{for }  x\in\Gamma.
\end{cases}
\end{equation}
It follows that $(-{\underline u}_\epsilon)$ is the unique solution to
the approximation (\ref{obst2}) of the single obstacle problem with
boundary data $(-F)$ and the lower obstacle $(-\Phi_2)$.
Again, by \cite{LM} and possibly decreasing the values $r_0,
\epsilon_0>0$ in (\ref{asc1}), we obtain:
\begin{equation}\label{asc2}
\forall x_0, y_0\in\bar{\Omega} 
\qquad |x_0-y_0|<r_0\Rightarrow |{\underline u}_\epsilon(x_0) - {\underline u}_\epsilon(y_0)| < \eta.
\end{equation}

\smallskip

Note now that by Lemma \ref{comparison} there must be:
$${\underline u}_\epsilon \leq u_\epsilon \leq \bar u_\epsilon \qquad
\mbox{ in } \bar\Omega.$$
Consequently, for any $x_0\in\bar\Omega$ and $y_0\in\partial\Omega$
such that $|x_0 - y_0|<r_0$, we get:
\begin{equation*}
\begin{split}
& u_\epsilon(x_0) - u_\epsilon(y_0) \leq \bar u_\epsilon(x_0) - F(y_0) =  \bar u_\epsilon(x_0) - \bar u_\epsilon(y_0) 
<\eta,\\
& u_\epsilon(x_0) - u_\epsilon(y_0) \geq {\underline u}_\epsilon(x_0) -
F(y_0) =  {\underline u}_\epsilon(x_0) - {\underline u}_\epsilon(y_0)  > -\eta,
\end{split}
\end{equation*}
which yields: $|u_\epsilon(x_0) - u_\epsilon(y_0) |<\eta$.

\medskip

{\bf 2.} We now justify the validity of (\ref{asc}) for arbitrary
$x_0, y_0\in\Omega$ by transfering the boundary estimates to the interior of
the domain $\Omega$. This is done as in the proof of \cite[Corollary 4.5]{LM}.
Fix $\eta>0$. In view of the first part of the proof, as well as the
Lipschitzeanity of $F, \Psi_1$ and $\Psi_2$, we may find $r_0, \epsilon_0>0$ such that:
\begin{equation}\label{4.6}
\begin{split}
& \forall \epsilon < \epsilon_0\quad \forall x_0\in\bar\Omega, ~
y_0\in\partial\Omega\qquad 
|x_0-y_0|<r_0\Rightarrow |u_\epsilon(x_0) - u_\epsilon(y_0)| < \frac{\eta}{4}\\
& \forall x_0, y_0\in X \qquad |x_0-y_0|<r_0\Rightarrow |\Psi_1(x_0)-\Psi_1(y_0)|, ~
|\Psi_2(x_0)-\Psi_2(y_0)| <\frac{\eta}{4}\\
& \forall x_0, y_0\in X \qquad  |x_0-y_0|<r_0\Rightarrow ~ |F(x_0) - F(y_0)| < \frac{\eta}{4}.
\end{split}
\end{equation}

Call: $~\tilde\Gamma = \{x\in\bar\Omega; ~
\mbox{dist}(x, \partial\Omega)\leq r_0/2\}$ and note that by (\ref{4.6}):
\begin{equation}\label{mam}
\forall \epsilon < \epsilon_0\quad \forall x_0, y_0\in\tilde \Gamma\qquad 
|x_0-y_0|<c{r_0}\Rightarrow |u_\epsilon(x_0) - u_\epsilon(y_0)| < \frac{3}{4}\eta,
\end{equation}
for an appropriately small constant $c\in (0,1)$. Fix arbitrary $x_0, y_0\in\Omega$ with $|x_0 -
y_0|<c r_0$, and for any $\epsilon<\epsilon_0$ define the bounded Borel
functions $\tilde F:\tilde\Gamma\to\R$ and $\tilde\Psi_1, \tilde\Psi_2:\R^N\to \R$ by:
\begin{equation*}
\begin{split}
& \tilde{F}(z)=u_\epsilon(z-(x_0-y_0)) + \frac{3}{4}\eta\\
& \tilde{\Psi}_1(z)=\Psi_1(z-(x_0-y_0)) + \frac{3}{4}\eta, \qquad
\tilde{\Psi}_2(z)=\Psi_2(z-(x_0-y_0)) + \frac{3}{4}\eta.
\end{split}
\end{equation*}
Let $\tilde{u}_\epsilon$ be the unique solution to the min-max principle as in Theorem \ref{epsilonp}:
\begin{equation*}
\tilde u_\epsilon(x) = 
\begin{cases}
\max\Bigg\{\tilde\Psi_1(x),\min\Big\{\displaystyle{\tilde\Psi_2(x),\frac{\alpha}{2}\sup_{B_\epsilon(x)}
\tilde u_\epsilon + \frac{\alpha}{2}\inf_{B_\epsilon(x)} \tilde u_\epsilon +
\beta\fint_{B_\epsilon(x)}\tilde u_\epsilon}\Big\}\Bigg\} &
\mbox{for }  x\in\Omega\setminus \tilde\Gamma, \\
\tilde F(x) & \mbox{for }  x\in\tilde \Gamma.
\end{cases}
\end{equation*}
By uniqueness of such solution, there must be:
$$\tilde u_\epsilon(z) = u_\epsilon(z-(x_0-y_0)) + \frac{3}{4}\eta
\qquad \mbox{ in } \Omega.$$
On the other hand, since in view of (\ref{4.6}) and (\ref{mam}) there
is: $\tilde F\geq u_\epsilon$ in $\tilde\Gamma$ and $\tilde\Psi_1\geq \Psi_1$, 
$\tilde \Psi_2\geq \Psi_2$ in $\mathbb{R}^N$, Lemma \ref{comparison}
implies that: $\tilde u_\epsilon \geq u_\epsilon$ in  $\bar\Omega$. Thus: 
$$\forall\epsilon<\epsilon_0\qquad u_\epsilon(x_0) - u_\epsilon(y_0)
\leq \tilde u_\epsilon(x_0) - u_\epsilon(y_0) = u_\epsilon (y_0) +
\frac{3}{4}\eta - u_\epsilon(y_0) < \eta.$$
Exchanging $x_0$ with $y_0$, the same argument yields: $u_\epsilon(y_0)
- u_\epsilon(x_0) <\eta$, achieving the Lemma.
\end{proof}

\bigskip

We are now ready to give:

\bigskip

\noindent{\bf Proof of Theorem \ref{main}.}

By Lemma \ref{aa} and in virtue of the Ascoli-Arzel\`a type of result in
\cite[Lemma 4.2]{MPR}, it follows that $\{u_\epsilon\}$ has a subsequence
converging uniformly in $\bar\Omega$ to a continuous function
$u:\bar\Omega\to\R$. We now show that $u$ is a viscosity solution of (\ref{obst}). By uniqueness
of such solutions that will be shown in Theorem \ref{thuni}, 
we will conclude that the whole sequence $\{u_\epsilon\}$ converges to the same limit $u$. 

In order to prove (ii), assume that $\Psi_1(x_0) < u(x_0)$. By
continuity of $\Psi_1$ and $u$, we obtain that also: $\Psi_1 <
u_\epsilon$ in some $B_\delta(x_0)\subset\Omega$ and for all small
$\epsilon<\epsilon_0$. By (\ref{minmax}) we then get:
\begin{equation}\label{obst3}
\forall x\in B_\delta(x_0)\qquad -u_\epsilon(x) = \max\Big\{
-\Psi_2(x), \displaystyle{\frac{\alpha}{2}\sup_{B_\epsilon(x)}
(-u_\epsilon) + \frac{\alpha}{2}\inf_{B_\epsilon(x)} (-u_\epsilon) +
\beta\fint_{B_\epsilon(x)}(-u_\epsilon)}\Big\}.
\end{equation}
Applying the proof of \cite[Theorem 1.2]{LM}, we directly conclude (ii), since
$\{u_\epsilon\}$ satisfies the discrete approximation
(\ref{obst3}) of the single lower obstacle $(-\Psi_2)$ problem in a neighbourhood of $x_0$.

Likewise, to justify (iii) let $u(x_0) < \Psi_2(x_0)$. We have:
$$\exists \delta, \epsilon_0>0\quad \exists C\qquad \forall
\epsilon<\epsilon_0\quad \forall x\in B_\delta(x_0)\qquad
u_\epsilon(x) < C < \Psi_2(x) $$
for some constant $C$. Consequently, (\ref{minmax}) implies:
$$\forall x\in B_\delta(x_0)\qquad u_\epsilon(x) = \max\Big\{
\Psi_1(x), \displaystyle{\frac{\alpha}{2}\sup_{B_\epsilon(x)}
u_\epsilon + \frac{\alpha}{2}\inf_{B_\epsilon(x)} u_\epsilon +
\beta\fint_{B_\epsilon(x)} u_\epsilon}\Big\}$$
and so $\Delta_p\phi(x_0)\leq 0$ for any appropriate test
function $\phi$ supporting $u$ from below at $x_0$. Indeed, one may
apply the local argument in \cite[Theorem 1.2]{LM} to the obstacle
problem with the single lower obstacle $\Psi_1$.
\endproof

\section{Uniqueness of viscosity solutions to the double-obstacle
  problem (\ref{obst})} \label{secuni}

We start by recalling the following result, due to Farnana in \cite{farn}:

\begin{theorem}\label{thf}
Let $p,\Psi_1, \Psi_2, F$ be as in Theorem \ref{main}. Define the set:
$$\mathcal{K}_{\Psi_1,\Psi_2,F}(\Omega) = \Big\{u\in W^{1,p}(\Omega);
  ~ u=F \mbox{ on } \partial\Omega ~\mbox{ and } ~\Psi_1\leq u\leq
  \Psi_2 \mbox{ in } \Omega\Big\}.$$
\begin{itemize}
\item[(i)] There exists a unique $u\in \mathcal{K}_{\Psi_1,\Psi_2,F}(\Omega)$ such that:
\begin{equation}\label{var}
\int_\Omega |\nabla u|^p\leq \int_\Omega |\nabla v|^2 \qquad \forall v\in \mathcal{K}_{\Psi_1,\Psi_2,F}(\Omega).
\end{equation}
\item[(ii)] The unique minimizer $u$ in (\ref{var}) is continuous: $u\in\mathcal{C}(\bar \Omega)$.
\item[(iii)] Let $\bar u$ be the unique minimizer, as above, for the
  data $\bar\Psi_1$, $\bar\Psi_2$ and $\bar F$. If  $\bar\Psi_1\leq\Psi_1$, $\bar\Psi_2\leq \Psi_2$ 
and $\bar F\leq F$, then $\bar u\leq u$ in $\bar\Omega$.
\end{itemize}
\end{theorem}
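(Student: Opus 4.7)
\noindent\emph{Proof proposal.} The plan is to establish (i) by the direct method of the calculus of variations, (iii) by a simultaneous truncation argument combined with strict convexity of the $p$-Dirichlet energy, and to obtain (ii) by invoking the regularity theory for two-sided obstacle minimizers developed in \cite{farn}; step (ii) is the one I expect to be the main technical hurdle.

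For (i), I would first check that $\mathcal{K}_{\Psi_1,\Psi_2,F}(\Omega)$ is nonempty: since $\Psi_1, \Psi_2, F$ are Lipschitz with $\Psi_1\leq F\leq\Psi_2$ on $\partial\Omega$ and $\Psi_1\leq\Psi_2$ in $\bar\Omega$, the Lipschitz competitor $\max\{\Psi_1,\min\{\Psi_2,F\}\}$ belongs to $\mathcal{K}$. The set is convex, and it is strongly closed in $W^{1,p}(\Omega)$ because the pointwise bounds $\Psi_1\leq v\leq\Psi_2$ pass to a.e.-convergent subsequences and the trace operator is continuous; convexity then promotes this to weak closedness. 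The functional $I(v)=\int_\Omega|\nabla v|^p$ is strictly convex (hence weakly lower semicontinuous) and, via the Poincar\'e inequality in the affine subspace of fixed trace $F$, coercive on $\mathcal{K}$. A minimizing sequence therefore admits a weakly convergent subsequence, giving existence, and strict convexity forces uniqueness.

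For (iii), I would employ the simultaneous max/min truncation $\phi:=\max\{u,\bar u\}$ and $\psi:=\min\{u,\bar u\}$. A direct case analysis using $\bar\Psi_1\leq\Psi_1$, $\bar\Psi_2\leq\Psi_2$, $\bar F\leq F$ and admissibility of $u,\bar u$ shows that $\phi\in\mathcal{K}_{\Psi_1,\Psi_2,F}(\Omega)$ and $\psi\in\mathcal{K}_{\bar\Psi_1,\bar\Psi_2,\bar F}(\Omega)$. Testing the minimality of $u$ against $\phi$ and of $\bar u$ against $\psi$, adding the resulting inequalities and invoking the pointwise chain-rule identity
\begin{equation*}
|\nabla\phi|^p + |\nabla\psi|^p \;=\; |\nabla u|^p + |\nabla\bar u|^p \qquad \mbox{a.e. in } \Omega,
\end{equation*}
forces equality in each of the two minimality inequalities separately. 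Thus $\phi$ is itself a minimizer of the $(\Psi_1,\Psi_2,F)$ problem, so by the uniqueness from (i), $\phi=u$ a.e., i.e.\ $u\geq\bar u$ in $\Omega$; the continuity provided by (ii) then extends the inequality to $\bar\Omega$.

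Part (ii) is the step I expect to be the main obstacle, and I would handle it by quoting \cite{farn} directly, since this theorem is assembled in the form stated precisely in order to package Farnana's results. For completeness, the ingredients I would confirm are in force are: interior local H\"older continuity, obtained by a De Giorgi / Moser iteration on the sub- and supersolution inequalities satisfied by $u$ respectively off the upper and lower coincidence sets, with Caccioppoli estimates produced by testing against admissible perturbations of the form $u\pm\varphi\eta$ truncated so as to respect both obstacles; and boundary continuity, obtained via a barrier construction exploiting Lipschitz regularity of $\partial\Omega$, $F$, $\Psi_1$ and $\Psi_2$. Rather than reproduce these arguments, I would rely on the metric-measure-space treatment in \cite{farn} and import the conclusion $u\in\mathcal{C}(\bar\Omega)$ from there.
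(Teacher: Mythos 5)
Your proposal is correct, and it actually supplies more than the paper does: the paper treats Theorem \ref{thf} as a quoted result, remarking only that (i) follows from strict convexity of the $p$-Dirichlet energy and that (ii) and (iii) are proved by Farnana in \cite{farn} in the metric-space setting. Your treatment of (i) by the direct method matches the paper's (one-line) indication, and your deferral of (ii) to \cite{farn} is exactly what the authors do; the genuinely hard content of the theorem is the continuity up to the boundary, and neither you nor the paper reproves it. Where you go beyond the paper is part (iii): your lattice argument — checking that $\max\{u,\bar u\}\in\mathcal{K}_{\Psi_1,\Psi_2,F}$ and $\min\{u,\bar u\}\in\mathcal{K}_{\bar\Psi_1,\bar\Psi_2,\bar F}$ under the ordering of the data, summing the two minimality inequalities, and using the a.e.\ identity $|\nabla\max\{u,\bar u\}|^p+|\nabla\min\{u,\bar u\}|^p=|\nabla u|^p+|\nabla\bar u|^p$ together with uniqueness from (i) — is complete and correct, and it makes the comparison principle self-contained in the Euclidean $W^{1,p}$ setting rather than imported from the metric-measure-space machinery. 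This is a worthwhile trade: the paper's route is shorter but opaque, while yours isolates (ii) as the only step that truly requires the external reference. (Incidentally, the exponent $2$ on the right-hand side of the displayed inequality in the statement is a typo in the paper for $p$; your argument correctly treats it as $p$.)
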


We remark that existence and uniqueness of the variational solution in
(\ref{var}) is an easy direct consequence of the strict convexity of
the functional $\int |\nabla u|^p$. The regularity
and comparison principle statements in (ii) and (iii) were proved in
\cite{farn} in the generalized setting of the dounble obstacle problem on metric spaces.

\medskip

A standard calculation easily shows that the unique variational
solution to the double-obstacle problem as in Theorem \ref{thf} (i), must be
a viscosity solution in the sence of Definition \ref{viscsol}. Therefore,
in view of uniqueness, proved below, the two notions actually coincide.
Here is the main result of this section:

\begin{theorem}\label{thuni}
Let $p,\Psi_1, \Psi_2, F$ be as in Theorem \ref{main}.  Let $u$ and
$\bar u$ be two viscosity solutions to (\ref{obst}) as in Definition \ref{viscsol}. Then $u=\bar u$.
\end{theorem}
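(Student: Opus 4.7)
\textbf{Proof proposal for Theorem \ref{thuni}.}

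The strategy is to show that every viscosity solution of (\ref{obst}) coincides with the unique minimizer $u^\ast$ of the variational problem (\ref{var}) from Theorem \ref{thf}(i). Since the paragraph preceding Theorem \ref{thuni} notes that $u^\ast$ is itself a viscosity solution, uniqueness reduces to the one-sided implication \emph{viscosity solution of (\ref{obst}) implies variational minimizer of (\ref{var})}. Applied to both $u$ and $\bar u$, this yields $u=\bar u=u^\ast$.

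The argument begins by verifying that any viscosity solution $u$ belongs to $\mathcal{K}_{\Psi_1,\Psi_2,F}(\Omega)$: continuity, the pointwise constraint $\Psi_1\le u\le \Psi_2$, and the boundary condition are built into Definition \ref{viscsol}(i), while the Sobolev regularity $u\in W^{1,p}(\Omega)$ follows from the Juutinen--Lindqvist--Manfredi theory applied on the open non-coincidence sets (which yields $W^{1,p}_{\mathrm{loc}}$), combined with the Lipschitz bounds inherited from the obstacles on the coincidence sets and standard Caccioppoli-type estimates up to the boundary.

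The technical heart is the translation of the pointwise viscosity conditions into local weak inequalities. By Definition \ref{viscsol}(ii), $u$ is a viscosity subsolution of $-\Delta_p u=0$ on the open set $U^-=\{u>\Psi_1\}$; by Definition \ref{viscsol}(iii), $u$ is a viscosity supersolution of $-\Delta_p u=0$ on $U^+=\{u<\Psi_2\}$. The Juutinen--Lindqvist--Manfredi equivalence between viscosity and weak solutions for the $p$-Laplacian upgrades these to weak sub/supersolution statements on $U^\pm$. Fix $v\in \mathcal{K}_{\Psi_1,\Psi_2,F}(\Omega)$ and decompose $v-u=\phi_+ -\phi_-$ with $\phi_\pm = (v-u)^\pm \in W^{1,p}_0(\Omega)$. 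Since $v\le\Psi_2$, any point where $\phi_+>0$ satisfies $u<v\le\Psi_2$, so by continuity $\phi_+$ extends by zero to an element of $W^{1,p}_0(U^+)$; symmetrically $\phi_-\in W^{1,p}_0(U^-)$. Testing the weak super/subsolution properties then gives
\[
\int_\Omega |\nabla u|^{p-2}\nabla u\cdot \nabla \phi_+\,dx\ge 0 \quad\text{and}\quad
\int_\Omega |\nabla u|^{p-2}\nabla u\cdot \nabla \phi_-\,dx\le 0,
\]
so subtracting produces the variational inequality $\int_\Omega |\nabla u|^{p-2}\nabla u\cdot \nabla(v-u)\,dx\ge 0$ for all $v\in\mathcal{K}_{\Psi_1,\Psi_2,F}(\Omega)$. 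By strict convexity of $w\mapsto \int_\Omega|\nabla w|^p$ on the convex set $\mathcal{K}_{\Psi_1,\Psi_2,F}(\Omega)$, this variational inequality characterizes $u$ as the unique minimizer, hence $u=u^\ast$ by Theorem \ref{thf}(i).

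The main obstacle is the viscosity-to-weak passage for the degenerate $p$-Laplacian invoked in the previous step: the classical Juutinen--Lindqvist--Manfredi equivalence is delicate precisely because the operator becomes singular at critical points of $u$, and the proof relies on inf/sup-convolutions and careful distributional limits. A secondary point requiring care is the admissibility of $\phi_\pm$ as test functions in $W^{1,p}_0(U^\pm)$, which uses continuity of $u$, $v$, $\Psi_1$, $\Psi_2$ together with the shared boundary trace $u=v=F$ on $\partial\Omega$.
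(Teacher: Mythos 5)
Your overall blueprint --- upgrade the viscosity inequalities of Definition \ref{viscsol}(ii)--(iii) to weak sub/supersolution statements via the Juutinen--Lindqvist--Manfredi equivalence on the non-coincidence sets, split a competitor's increment into its positive and negative parts supported there, and conclude from convexity that $u$ satisfies the variational inequality --- is exactly Step 1 of the paper's proof. The genuine gap is your very first claim, that $u\in W^{1,p}(\Omega)$ and hence $u\in\mathcal{K}_{\Psi_1,\Psi_2,F}(\Omega)$. The JLM theory only gives $u\in W^{1,p}_{loc}$ on the open set $U^+\cup U^-=\Omega\setminus\{u=\Psi_1=\Psi_2\}$. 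The complement $\{u=\Psi_1=\Psi_2\}$ is a closed, possibly very irregular subset of $\Omega$ on which conditions (ii)--(iii) say nothing; knowing that $u$ agrees there with a Lipschitz function does not let you glue the local Sobolev pieces into a global one, because you have no uniform bound on $\int|\nabla u|^p$ as you approach that set from within $U^{\pm}$. Caccioppoli estimates for $p$-sub/supersolutions are interior estimates whose constants degenerate as the balls must shrink near $\partial U^{\pm}$ and near $\partial\Omega$, so ``Caccioppoli-type estimates up to the boundary'' is not available off the shelf: producing such a bound is essentially interior and boundary $W^{1,p}$ regularity for the viscosity solution of the double obstacle problem, i.e.\ close to what you are trying to prove. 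The trace requirement $u-F\in W^{1,p}_0(\Omega)$ has the same problem. Without membership in $\mathcal{K}_{\Psi_1,\Psi_2,F}(\Omega)$, the identification $u=u^{\ast}$ never gets started.

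The paper is structured precisely to avoid this. It proves the variational characterization only on Lipschitz subdomains $\mathcal{U}\subset\subset\{\Psi_1\neq\Psi_2\}$, where $\mathcal{U}=\mathcal{U}_1\cup\mathcal{U}_2$ and a compact-exhaustion argument ($\mathcal{U}\subset\subset\tilde{\mathcal{U}}$) gives $u\in W^{1,p}(\mathcal{U})$ with boundary datum $u_{|\partial\mathcal{U}}$; it then closes the uniqueness argument with a second step that is entirely absent from your proposal. Namely, on $A=\{\Psi_1=\Psi_2\}\cup\partial\Omega$ any two viscosity solutions coincide (both are squeezed between the obstacles, respectively equal $F$), so uniform continuity gives $|u-\bar u|\le\epsilon$ on a $\delta$-neighbourhood $\mathcal{O}_\delta(A)$; choosing $\mathcal{U}$ between $\Omega\setminus\mathcal{O}_\delta(A)$ and $\Omega\setminus A$ and applying Farnana's comparison principle, Theorem \ref{thf}(iii), to $u$ and the shifted solution $\bar u+\epsilon$ (a minimizer for the data $\Psi_1$, $\Psi_2+\epsilon$, $\bar u_{|\partial\mathcal{U}}+\epsilon$) yields $u\le\bar u+\epsilon$ on $\mathcal{U}$, hence on all of $\bar\Omega$, and letting $\epsilon\to0$ finishes. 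Either supply the missing global energy estimate up to $\{\Psi_1=\Psi_2\}$ and up to $\partial\Omega$, or adopt this localization-plus-comparison device. (A smaller point worth a sentence in any write-up: the claim $\phi_+\in W^{1,p}_0(U^+)$ for an arbitrary open set $U^+$, from the mere fact that $\phi_+$ vanishes continuously outside $U^+$, is standard but not free.)
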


\begin{proof}
{\bf 1.} Let $\mathcal{U}$ be any open, Lipschitz set such that:
$$\mathcal{U}\subset\subset \{x\in\Omega;~ \Psi_1(x)\neq
\Psi_2(x)\}.$$ 
We will show that  $u$ as in the statement of the Theorem is the variational solution to
the double-obstacle problem on $\mathcal{U}$, in the sence of
(\ref{var}) in the set $\mathcal{K}_{\Psi_1, \Psi_2,  u_{\mid \partial\mathcal{U}}}(\mathcal{U})$.

Firstly, note that on the open set $\mathcal{U}_2=\{x\in\mathcal{U}; ~ u(x)<\Psi_2(x)\}$,
the continuous function $u$ is a viscosity $p$-supersolution to
(\ref{plap}). Thus, by the celebrated result in \cite{JLM}, $u$ is
$p$-superharmonic in $\mathcal{U}_2$ and consequently (see \cite{L})
$u \in W^{1,p}_{loc}(\mathcal{U}_2)$. In the same manner, $u$ is
a viscosity $p$-subsolution on $\mathcal{U}_1=\{x\in\mathcal{U}; ~ u(x)>\Psi_1(x)\}$,
hence it is $p$-subharmonic in $\mathcal{U}_1$ and $u \in W^{1,p}_{loc}(\mathcal{U}_1)$. 
Observing that $\mathcal{U}=\mathcal{U}_1\cup\mathcal{U}_2$ we obtain that $u
\in W^{1,p}_{loc}(\mathcal{U})$. Repeating the same argument on
$\tilde{\mathcal{U}}\supset\supset \mathcal{U}$ we conclude that
actually $u \in W^{1,p}(\mathcal{U})$. 

Recall that for a continuous function with regularity $W^{1,p}$, the
notions of $p$-superharmonic ($p$-subharmonic) and weak supersolution
(respectively weak subsolution) agree \cite{L}. We thus get:
\begin{equation}\label{lu1}
\int_{\mathcal{U}_1}|\nabla u|^p\leq \int_{\mathcal{U}_1}|\nabla
(u+\phi)|^p \qquad \forall \phi\in\mathcal{C}_0^\infty(\mathcal{U}_1,\mathbb{R}_+),
\end{equation}
\begin{equation}\label{lu2}
\int_{\mathcal{U}_2}|\nabla u|^p\leq \int_{\mathcal{U}_2}|\nabla
(u+\phi)|^p \qquad \forall \phi\in\mathcal{C}_0^\infty(\mathcal{U}_2,\mathbb{R}_-).
\end{equation}

Let now $\phi\in\mathcal{C}_0^\infty(\mathcal{U},\mathbb{R})$ be such
that $\Psi_1\leq u+\phi\leq\Psi_2$. We write: $\phi = \phi^+ +\phi^-$
as the difference of the positive and negative parts of $\phi$. Denote:
\begin{equation*}
D^+ = \{x\in\mathcal{U}; ~ \phi(x)>0\}\subset\mathcal{U}_1\quad\mbox{
  and } \quad D^- = \{x\in\mathcal{U}; ~ \phi(x)<0\}\subset\mathcal{U}_2.
\end{equation*}
Then we have:
\begin{equation}\label{lu3}
\begin{split}
& \int_{\mathcal{U}}|\nabla u +\nabla \phi|^p  = \int_{D^+}|\nabla u
+\nabla \phi|^p + \int_{D^-}|\nabla u +\nabla \phi|^p +
\int_{\{\phi=0\}}|\nabla u|^p \\ & =
\int_{\mathcal{U}_1}|\nabla u +\nabla (\phi^+)|^p - \int_{\mathcal{U}_1\setminus D^+}|\nabla u|^p 
+ \int_{\mathcal{U}_2}|\nabla u +\nabla (\phi^-)|^p - \int_{\mathcal{U}_2\setminus D^-}|\nabla u|^p 
+ \int_{\{\phi=0\}}|\nabla u|^p \\ & \geq \int_{\mathcal{U}_1}|\nabla u|^p - \int_{\mathcal{U}_1\setminus D^+}|\nabla u|^p  
+ \int_{\mathcal{U}_2}|\nabla u|^p - \int_{\mathcal{U}_2\setminus D^-}|\nabla u|^p 
+ \int_{\{\phi=0\}}|\nabla u|^p = \int_{\mathcal{U}}|\nabla u|^p, 
\end{split}
\end{equation}
where the inequality above  follows from (\ref{lu1}) and (\ref{lu2})
that are still valid with the test functions $\phi^+\in
W_0^{1,2}(\mbox{supp } \phi,\mathbb{R}_+)$ and 
$\phi^-\in W_0^{1,2}(\mbox{supp } \phi,\mathbb{R}_-)$.

\medskip

{\bf 2.} Let now $u$ and $\bar u$ be two viscosity solutions to the
problem (\ref{obst}). Note that on the closed (and possibly very
irregular) set $A=\{x\in\bar\Omega;~ \Psi_1(x)=\Psi_2(x)\} \cup\partial\Omega$ 
we have $u=\bar u$. 

Fix $\epsilon>0$. By the uniform continuity of
$u$, $\bar u$ on $\Omega$, there exists $\delta>0$ such that:
\begin{equation}\label{lu4}
|u(x)-\bar u(x)|\leq \epsilon \qquad\forall x\in\mathcal{O}_\delta(A)
:= \big(A+B(0,\delta)\big)\cap\bar\Omega.
\end{equation}
Consider an arbitrary open, Lipschitz set  $\mathcal{U}$  satisfying:
$$\Omega\setminus \mathcal{O}_\delta(A)\subset\subset \mathcal{U}
\subset\subset \Omega\setminus A.$$
By the argument in Step 1, $u$ is the variational solution as in (\ref{var})
in the set $\mathcal{K}_{\Psi_1,\Psi_2, u_{\mid \partial\mathcal{U}}}(\mathcal{U})$, and $\bar u+\epsilon$
is the variational solution in the set $\mathcal{K}_{\Psi_1,
  \Psi_2+\epsilon, \bar{u}_{\mid\partial\mathcal{U}}+\epsilon}(\mathcal{U})$.
Since $u<\bar u+\epsilon$ on $\partial\mathcal{U}$ in view of
(\ref{lu4}), the comparison principle in Theorem \ref{thf} (iii) implies now that
$u\leq\bar u+\epsilon $ in $\bar{\mathcal{U}}$. 

Reversing thesame  argument and taking into account (\ref{lu4}), we arrive at:
$$|u(x) - \bar u(x)|\leq \epsilon \qquad \forall x\in\bar\Omega.$$
We conclude that $u=\bar u$ in $\bar\Omega$ passing to the limit
$\epsilon\to 0$ in the above bound.
\end{proof}

\section{The tug-of-war game with double stopping times}\label{sec4}

Consider the following game, played by Player I and Player II on the
board given by the set $X$ and with the initial poition of the token
$x_0\in X$. At each turn of the game, a coin is flipped in order to
determine which player is in charge. The chosen player is allowed to move
the token to any point in an open ball of radius $\epsilon$ around the
current position $x_n$. He is also allowed to forfeit the move and stop the
game instead. If Player I stops the game then the payoff is
$\Psi_1(x_n)$, while is Player II stops, then the payoff is
$\Psi_2(x_n)$. If neither player decides to stop the game, it is
stopped when the token reaches the boundary $\Gamma$. In this case the
payoff is $F(x_n)$. The payoff is always awarded to Player I and
penalizes Player II (this is a zero-sum game), so that Player I will try to maximize and Player II to minimize it.

We now show that solutions $u_\epsilon$ of (\ref{minmax}) coincide
with the expected value of the above game, when both players play optimally. 
We begin by introducing the necessary probability framework.
	
\subsection{The measure spaces} 
Fix $x_0\in X$ and define:
$$X^{\infty,x_0}=\big\{\omega=(x_0,x_1,x_2...);~ x_n\in X \mbox{ for all } n\geq 1\big\},$$
to be the space of all infinite game runs, recording by $x_n\in X$ 
the position of the token at the $n$-th step of the game. 
For each $n\geq 1$, let $\mathcal{F}^{x_0}_n$ be the $\sigma$-algebra of subsets 
of $X^{\infty, x_0}$ generated by all sets consisting of game runs of length $n$:
\begin{equation}\label{conv}
A_1\times \ldots \times A_n := \big\{\omega \in \{x_0\}\times
A_1\times \ldots\times A_n\times X\times X\times \ldots \big\},
\end{equation}
where $A_1,\ldots A_n$ are Borel subsets of $X$. We then define
$\mathcal{F}^{x_0}$ as the $\sigma-$algebra of subsets of $X^{\infty,
  x_0}$ generated by $\bigcup_{n=1}^\infty \mathcal{F}^{x_0}_n$. 
Clearly, the increasing sequence $\{\mathcal{F}^{x_0}_n\}_{n\geq 1}$
is a filtration of $\mathcal{F}^{x_0}$, and the coordinate projections
$x_n:X^{\infty,x_0}\to X$ given by: $x_n(\omega) = x_n$ are
$\mathcal{F}^{x_0}_n$ measurable.
	
\subsection{The strategies} 	
For every $n\geq 0$, let $\sigma_{I}^n, \sigma_{II}^n:X^{n+1}\to X$ be Borel measurable
functions, indicating the position of the token if it is moved by
Player I or Player II, respectively, at the $n$-th step of the game
given the history $(x_0,\ldots x_n)$. We assume that:
$$\sigma_{I}^n (x_0, \ldots x_n),~\sigma_{II}^n(x_0,\ldots ,x_n)\in
B_\epsilon(x_n)\cap X$$ 
and we call the collections $\sigma_{I}=\{\sigma_{I}^n\}_{n\geq 0}$ and
$\sigma_{II}=\{\sigma_{II}^n\}_{n\geq 0}$ the strategies of Players I and II.

\subsection{The stopping times} 
Recall that a random variable $\tau:X^{\infty,x_0}\to \mathbb{N}\cup\{+\infty\}$ is a stopping
time with respect to the filtration $\{\mathcal{F}_n^{x_0}\}$ if
$\tau^{-1}(\{0,1,\ldots n\}) \in \mathcal{F}^{x_0}_n$ for all $n\geq
1$. We define:
$$A^\tau_n=\big\{(x_0,...,x_n);~\exists\omega=(x_0,\ldots,
x_n,x_{n+1},\ldots)\in X^{\infty, x_0}, ~ \tau(\omega)\leq n\big\}.$$ 
Let $\tau_I,\tau_{II}$ be two stopping times as above, chosen by
Players I and II. We assume that they both do not exceed the exit time
from $\Omega$, i.e.:
$$\forall\omega\in X^{\infty, x_0}\qquad \tau_I(\omega),
\tau_{II}(\omega) \leq \tau_0(\omega) = \min\{n\geq 0;~ x_n(\omega)\in\Gamma\},$$ 
with the convention that the minimum over the empty set
is $+\infty$. For every $n\geq 0$ we then define:
$$A_n^{\tau_I < \tau_{II}} = \bigcup_{k=1}^n \Big(A_k^{\tau_I}\setminus A_k^{\tau_{II}}\Big).$$

\subsection{The probability measures} 
Fix two parameters $\alpha, \beta\geq 1$ with $\alpha+\beta=1$. Given
strategies $\sigma_I, \sigma_{II}$ and a stopping time $\tau\leq
\tau_0$ as above, we  define a family of ``transition'' probability (Borel)
measures on $X$. Namely, for $n\geq 1$ and every finite history
$(x_0,\ldots, x_n)\in X^{n+1}$ we set:
\begin{equation}\label{trans}
\gamma_n[x_0,\ldots,x_n] =
\begin{cases}
{\displaystyle \frac{\alpha}{2}\delta_{\sigma_{I}^n(x_0,\ldots,x_n)} +
\frac{\alpha}{2}\delta_{\sigma_{II}^n(x_0,\ldots,x_n)} + 
\beta\frac{\mathcal{L}_N\lfloor B_\epsilon(x_n)}{|B_\epsilon|}} & \mbox{for } (x_0,\ldots,x_n)\not\in A^{\tau}_n,\\
\delta_{x_n} & \mbox{otherwise}.
\end{cases}
\end{equation}
Above, $\delta_{y}$ stands for the Dirac delta at a given point $y\in X$, 
while $\frac{\mathcal{L}_N\lfloor B_\epsilon(x_n)}{|B_\epsilon|}$
denotes the $N$-dimensional Lebesgue measure restricted to the ball
$B_\epsilon(x_n)$ and normalised by its volume.

Note that the family (\ref{trans}) is jointly measurable, in the sense
that  for every $n\geq 1$ and every fixed Borel set $A\subset  X$,
the function:
$$ X^{n+1}\ni (x_0,\ldots, x_n) \mapsto \gamma_n[x_0,\ldots, x_n](A)
\in\R$$
is Borel measurable. Thus, we the probability measure
$\mathbb{P}_{\sigma_I,\sigma_{II},\tau}^{n, x_0}$ on $(X^{\infty,
  x_0}, \mathcal{F}_n^{x_0})$ is well defined:
$$ \mathbb{P}_{\sigma_I,\sigma_{II}, \tau}^{n, x_0} (A_1\times\ldots\times A_n) 
= \int_{A_1}\ldots\int_{A_n} 1 ~\mbox{d}\gamma_{n-1}[x_0,\ldots,x_{n-1}] \ldots \mbox{d}\gamma_{0}[x_0],$$
for every $n$-tuple of Borel sets $A_1,\ldots, A_n\subset X$. 
The family $\{ \mathbb{P}_{\sigma_I,\sigma_{II}, \tau}^{n,  x_0}\}_{n\geq 1}$ 
is also consistent, so it generates (by Kolmogoroff's consistency theorem
\cite{Var}) the unique probability measure:
$$ \mathbb{P}_{\sigma_I,\sigma_{II}, \tau}^{x_0} = \lim_{n\to\infty}
\mathbb{P}_{\sigma_I,\sigma_{II}, \tau}^{n, x_0} $$
on $(X^{\infty, x_0}, \mathcal{F}^{x_0})$ such that, using the notation convention (\ref{conv}), we have:
$$\forall n\geq 1 \quad\forall A_1\times\ldots\times A_n\in
\mathcal{F}_n^{x_0}\qquad  \mathbb{P}_{\sigma_I,\sigma_{II},
  \tau}^{x_0}(A_1\times \ldots\times A_n) =  \mathbb{P}_{\sigma_I,\sigma_{II}, \tau}^{n, x_0}
(A_1\times \ldots\times A_n).$$

One can easily prove the following useful observation, which follows
by directly checking the definition of conditional expectation:

\begin{lemma}\label{uno}
Let $v:X\rightarrow \mathbb{R}$ be a bounded Borel function. For any $n\geq 1$,
the conditional expectation $\mathbb{E}^{x_0}_{\sigma_{I},
  \sigma_{II}, \tau}\{v\circ x_n\mid \mathcal{F}^{x_0}_{n-1}\}$
of the random variable $v\circ x_n$  is a  $\mathcal{F}^{x_0}_{n-1}$
measurable function on $X^{\infty, x_0}$ (and hence it depends only on
the initial $n$ positions in the history $\omega =
(x_0, x_1,\ldots )\in X^{\infty, x_0}$), given by:
$$\mathbb{E}^{x_0}_{\sigma_{I},
  \sigma_{II}, \tau}\{v\circ x_n\mid \mathcal{F}^{x_0}_{n-1}\} (x_0, \ldots,x_{n-1}) 
= \int_X v ~\mathrm{d}\gamma_{n-1}[x_0,\ldots, x_{n-1}].$$
\end{lemma}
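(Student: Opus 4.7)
The plan is to exhibit the explicit candidate
$$g(x_0,\ldots,x_{n-1}) = \int_X v\,\mathrm{d}\gamma_{n-1}[x_0,\ldots,x_{n-1}],$$
check that it is $\mathcal{F}^{x_0}_{n-1}$-measurable, and then verify the defining averaging identity of conditional expectation against a generating $\pi$-system of $\mathcal{F}^{x_0}_{n-1}$.

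First, I would establish measurability of $g$. For a Borel set $A\subset X$, the function $(x_0,\ldots,x_{n-1})\mapsto \gamma_{n-1}[x_0,\ldots,x_{n-1}](A)$ is Borel measurable by the joint measurability remark following (\ref{trans}); this handles the case $v = \chi_A$. Linearity extends this to simple Borel $v$, and a standard monotone/bounded convergence argument extends it to every bounded Borel $v$. Composing with the coordinate map $\omega \mapsto (x_0(\omega),\ldots,x_{n-1}(\omega))$, which is $\mathcal{F}^{x_0}_{n-1}$-measurable by construction of the filtration, yields the desired $\mathcal{F}^{x_0}_{n-1}$-measurability of $g$ as a function on $X^{\infty,x_0}$.

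Second, I would verify the averaging property. It suffices to check
$$\int_{B}(v\circ x_n)\,\mathrm{d}\mathbb{P}^{x_0}_{\sigma_I,\sigma_{II},\tau} = \int_{B} g\,\mathrm{d}\mathbb{P}^{x_0}_{\sigma_I,\sigma_{II},\tau}$$
for every $B$ in the $\pi$-system of cylinder sets $B = A_1\times\cdots\times A_{n-1}$ generating $\mathcal{F}^{x_0}_{n-1}$, since both sides are finite measures (as $v$ is bounded) and agreement on a $\pi$-system generating the $\sigma$-algebra forces agreement on $\mathcal{F}^{x_0}_{n-1}$ by the $\pi$-$\lambda$ theorem. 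For such a cylinder set, the left-hand side equals, by the iterated-integral definition of $\mathbb{P}^{n,x_0}_{\sigma_I,\sigma_{II},\tau}$,
$$\int_{A_1}\cdots\int_{A_{n-1}}\!\Bigl(\int_X v\,\mathrm{d}\gamma_{n-1}[x_0,\ldots,x_{n-1}]\Bigr)\,\mathrm{d}\gamma_{n-2}[x_0,\ldots,x_{n-2}]\cdots\mathrm{d}\gamma_0[x_0],$$
where the innermost integration against $\gamma_{n-1}$ absorbs the $x_n$ variable over the whole space $X$ (because the cylinder set imposes no constraint on the $n$-th coordinate). The right-hand side is the same iterated integral with the inner factor replaced by $g(x_0,\ldots,x_{n-1})$, which by definition equals the bracketed quantity. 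The two expressions therefore coincide.

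The only step requiring any care, rather than mere bookkeeping, is justifying the measurability of $g$ for general bounded Borel $v$; but this is the standard approximation argument indicated above, and the joint measurability pointed out after (\ref{trans}) does all the real work. Once $g$ is measurable and the averaging property is checked on the generating $\pi$-system, the conclusion $g = \mathbb{E}^{x_0}_{\sigma_I,\sigma_{II},\tau}\{v\circ x_n\mid \mathcal{F}^{x_0}_{n-1}\}$ is immediate from the very definition of conditional expectation.
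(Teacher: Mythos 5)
Your proposal is correct and is precisely the ``direct check of the definition of conditional expectation'' that the paper invokes for Lemma \ref{uno} without writing it out: measurability of the candidate via the joint-measurability remark after (\ref{trans}) together with the standard simple-function approximation, and verification of the averaging identity on the generating $\pi$-system of cylinder sets using the iterated-integral definition of $\mathbb{P}^{n,x_0}_{\sigma_I,\sigma_{II},\tau}$. Nothing further is needed.
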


\smallskip

We now invoke two useful results:

\begin{lemma}\label{due} \cite{LM}
In the above setting, assume that $\beta>0$. Then the game stops almost surely:
\begin{equation*}
\mathbb{P}^{x_0}_{\sigma_I,\sigma_{II}, \tau} \big(\{\tau<\infty\}\big) = 1.
\end{equation*}
\end{lemma}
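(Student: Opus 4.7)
My plan is to extract a uniform positive exit probability from the noise component in (\ref{trans}) and iterate it geometrically, handling the freezing of the path after $\tau$ triggers by a comparison with an auxiliary never-freezing process.

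\textbf{Step 1 (Uniform exit bound).} Introduce the auxiliary probability $\bar{\mathbb{P}}^{x_0}_{\sigma_I,\sigma_{II}}$ generated by the transitions (\ref{trans}) with $A^\tau_n$ replaced by $\emptyset$, i.e., the pure mixed chain that never freezes. I claim there exist $M\in\mathbb{N}$ and $p_0>0$, depending only on $\beta$, $\epsilon$, $N$, and $\mathrm{diam}(\Omega)$, such that for any $y\in\Omega$ and any strategies, $\bar{\mathbb{P}}^{y}_{\sigma_I,\sigma_{II}}(\tau_0\leq M)\geq p_0$. The construction is geometric: fix a unit vector $v\in S^{N-1}$ and the spherical cap $H=\{z\in B_\epsilon:z\cdot v\geq\epsilon/2\}$, which has $\mathcal{L}_N(H)=c_N|B_\epsilon|$ for a dimensional constant $c_N>0$. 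Since the mixed transition dominates $\beta\mathcal{L}_N\lfloor B_\epsilon(x_{k-1})/|B_\epsilon|$ regardless of the strategies, Lemma~\ref{uno} gives $\bar{\mathbb{P}}(F_k\mid \mathcal{F}^y_{k-1})\geq\beta c_N$ for $F_k:=\{(x_k-x_{k-1})\cdot v\geq\epsilon/2\}$. Taking $M=\lceil 2\,\mathrm{diam}(\Omega)/\epsilon\rceil+1$ and iterating via the tower property, $\bar{\mathbb{P}}\bigl(\bigcap_{k=1}^MF_k\bigr)\geq(\beta c_N)^M=:p_0$; on this event the displacement $(x_M-y)\cdot v\geq M\epsilon/2$ exceeds $\mathrm{diam}(\Omega)$, forcing $x_M\notin\bar\Omega$, and since $\epsilon<\bar\epsilon_0$ the first crossing of $\partial\Omega$ must land in $\Gamma$, giving $\tau_0\leq M$.

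\textbf{Step 2 (Comparison and iteration).} The event $\{\tau>M\}\in\mathcal{F}^{x_0}_M$ is supported on histories $(x_0,\ldots,x_j)\notin A^\tau_j$ for all $j\leq M-1$; on such histories the transition measures defining $\mathbb{P}^{x_0}_{\sigma_I,\sigma_{II},\tau}$ and $\bar{\mathbb{P}}^{x_0}_{\sigma_I,\sigma_{II}}$ coincide, so
$$\mathbb{P}^{x_0}_{\sigma_I,\sigma_{II},\tau}(\tau>M)=\bar{\mathbb{P}}^{x_0}_{\sigma_I,\sigma_{II}}(\tau>M)\leq\bar{\mathbb{P}}^{x_0}_{\sigma_I,\sigma_{II}}(\tau_0>M),$$
using $\tau\leq\tau_0$ in the last step. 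Applying Step~1 iteratively (conditioning on $\mathcal{F}^{x_0}_{kM}$, on which $x_{kM}\in\Omega$ when $\tau_0>kM$) yields $\bar{\mathbb{P}}(\tau_0>kM)\leq(1-p_0)^k$, so $\mathbb{P}(\tau>kM)\leq(1-p_0)^k\to 0$, giving $\mathbb{P}(\tau=\infty)=0$.

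The main obstacle is Step~1, the strategy-independent positive lower bound: this is precisely where the hypothesis $\beta>0$ enters, guaranteeing a uniform density floor on the transition measure that the (possibly adversarial) players cannot remove. Step~2 is largely bookkeeping but requires care — one needs the measure-theoretic identification that $\mathbb{P}$ and $\bar{\mathbb{P}}$ integrate the indicator of $\{\tau>M\}$ against the same transitions, plus the usual Markov-type iteration of the exit estimate along the discrete time line.
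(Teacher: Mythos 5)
The paper does not prove Lemma \ref{due}; it is imported by citation from \cite{LM}. Your argument is correct and is essentially the standard ``directed march'' proof used there and in \cite{MPR, LPS}: the $\beta$-weighted uniform component of (\ref{trans}) gives, independently of the players' choices, conditional probability at least $\beta c_N$ of advancing $\epsilon/2$ in a fixed direction, and $M\approx 2\,\mathrm{diam}(\Omega)/\epsilon$ consecutive such advances force the token across $\partial\Omega$ into $\Gamma$ (each step being shorter than $\bar\epsilon_0$). Your Step 2 is also sound: $\{\tau>M\}$ is an $\mathcal{F}^{x_0}_M$-event supported on histories where the frozen and unfrozen kernels coincide, so the iterated integrals defining the two measures agree on it, and the geometric decay then follows from the tower property. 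One cosmetic remark: rather than introducing a ``never-freezing'' chain (whose noise component can in principle charge points outside $X$ once the token is deep in $\Gamma$), it is cleaner to take as the comparison measure $\mathbb{P}^{x_0}_{\sigma_I,\sigma_{II},\tau_0}$, i.e.\ the process frozen at the exit time itself; since $\tau\le\tau_0$, the kernels still agree on $\{\tau>M\}$, and the exit estimate of your Step 1 is unaffected because it only concerns the path up to $\tau_0$.
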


\smallskip

\begin{lemma}\label{selection} \cite{LPS, LM}
Let $u:X\to\mathbb{R}$ be a bounded, Borel function. Fix $\delta,
\epsilon>0$. There exist Borel functions $\sigma_{sup},
\sigma_{inf}:\Omega\to X$ such that:
\begin{equation*}
\forall x\in\Omega \qquad \sigma_{sup}(x), \sigma_{inf}(x) \in B_\epsilon(x)
\end{equation*}
and:
\begin{equation*}
\forall x\in\Omega \qquad u(\sigma_{sup}(x)) \geq
\sup_{B_\epsilon(x)} u - \delta, \qquad 
u(\sigma_{inf}(x)) \leq \inf_{B_\epsilon(x)} u + \delta.
\end{equation*}
\end{lemma}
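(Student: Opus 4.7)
The construction for $\sigma_{inf}$ will follow from the $\sigma_{sup}$ construction applied to $-u$, so I focus on $\sigma_{sup}$. The plan is a Borel measurable selection argument, built on a doubly-indexed partition of $X$: one set of indices coming from the level sets of $u$ (to control the value of $u$ at the selected point), another from small cubes in $\R^N$ (to localize the selected point). The elementary fact driving every measurability check is that for any subset $A \subset \R^N$ whatsoever, $\{x : B_\rho(x) \cap A \neq \emptyset\} = \bigcup_{y \in A} B_\rho(y)$ is open.

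First I would discretize the range of $u$ by a finite grid $\{s_k\}$ with $s_{k+1} - s_k < \delta/3$, giving disjoint Borel sets $E_k = u^{-1}([s_k, s_{k+1}))$ of oscillation at most $\delta/3$, only finitely many of which are non-empty. For each integer $n \geq 1$, I partition $\R^N$ into half-open dyadic cubes $\{C_j^n\}_{j \geq 1}$ of diameter less than $1/n$, and fix once and for all representatives $y_{k,j}^n \in E_k \cap C_j^n$ for every non-empty intersection. Then for each $x \in \Omega$ set $k_n^*(x) := \max\{k : E_k \cap B_{\epsilon - 1/n}(x) \neq \emptyset\}$ and $j_n^*(x) := \min\{j : E_{k_n^*(x)} \cap C_j^n \cap B_{\epsilon - 1/n}(x) \neq \emptyset\}$, and define $\sigma_n(x) := y_{k_n^*(x), j_n^*(x)}^n$. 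The openness observation makes $k_n^*$, $j_n^*$ and hence $\sigma_n$ Borel; the $1/n$-bound on cube diameters together with the triangle inequality places $\sigma_n(x)$ strictly inside $B_\epsilon(x)$; and since no $E_k$ with $k > k_n^*(x)$ meets $B_{\epsilon - 1/n}(x)$, one obtains $u(\sigma_n(x)) \geq \sup_{B_{\epsilon - 1/n}(x)} u - \delta/3$.

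Finally I would pass $n \to \infty$ by a patching trick. Since $\{B_{\epsilon - 1/n}(x)\}_n$ is increasing with union $B_\epsilon(x)$, monotone convergence gives $\sup_{B_{\epsilon - 1/n}(x)} u \nearrow \sup_{B_\epsilon(x)} u$ pointwise, so $N(x) := \min\{n \geq 1 : u(\sigma_n(x)) \geq \sup_{B_\epsilon(x)} u - \delta\}$ is finite for every $x \in \Omega$. The map $x \mapsto \sup_{B_\epsilon(x)} u$ is lower semicontinuous (a direct consequence of openness of $B_\epsilon(x)$), hence Borel, and so $N$ is Borel; setting $\sigma_{sup}(x) := \sigma_{N(x)}(x)$ yields the required Borel selector. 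The main obstacle I expect is exactly the step from the sup on $B_{\epsilon - 1/n}(x)$ to the sup on $B_\epsilon(x)$: because $u$ is only Borel, the sup on an open ball can jump at the boundary and is not attained in general, and it is this irregularity that forces the index-$N(x)$ patching instead of a cleaner one-shot measurable selection such as a direct application of Kuratowski--Ryll-Nardzewski to a closed-valued multifunction.
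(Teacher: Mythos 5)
Your argument is correct. Note that the paper itself offers no proof of this lemma --- it is quoted verbatim from \cite{LPS, LM} --- and your construction is essentially the standard one in those references: partition the (bounded) range of $u$ into level sets of small oscillation, partition $\R^N$ into small cubes, fix representatives in each nonempty intersection, and observe that sets of the form $\{x:\ A\cap B_\rho(x)\neq\emptyset\}$ are open, so the resulting countably-valued selector is Borel; the $1/n$-shrinking of the radius followed by the Borel patching via $N(x)$ is a clean way to handle the fact that $\sup_{B_\epsilon(x)}u$ need not be approached away from the boundary of the open ball. The only points to tidy are trivial: the index $n$ must start above $1/\epsilon$ so that $B_{\epsilon-1/n}(x)$ is a genuine ball, and one should remark that the representatives $y^n_{k,j}$ lie in $E_k\subset X$, so that $\sigma_{sup}$ indeed maps into $X$ as required.
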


\subsection{The game value solves the dynamic programming principle (\ref{minmax})}

In the above setting, let $\beta > 0$ and let $\Psi_1, \Psi_2:\R^N\rightarrow\R$
and $F:{\Gamma}\rightarrow\R$ be bounded Borel functions such that $\Psi_1 \leq \Psi_2$ in $X$ and
$\Psi_1 \leq F \leq \Psi_2$ in $\Gamma$. Given two stopping times
$\tau_I, \tau_{II}\leq \tau_0$, define the sequence of Borel functions
$G_n^{\tau_I, \tau_{II}}: X^{n+1}\to\R$, for all $n\geq 1$ by:
\begin{equation}\label{ggg}
G_n^{\tau_I, \tau_{II}}(x_0, \ldots, x_n) = \begin{cases} F(x_n) & \mbox{ for } x_n\in \Gamma\\
\Psi_1(x_n) & \mbox{ for } x_n\in \Omega \mbox{ and } (x_0, \ldots x_n)\in A_n^{\tau_I< \tau_{II}}\\
\Psi_2(x_n) & \mbox{ otherwise. }
\end{cases}
\end{equation}
We will use the following notation:
$$\forall \omega\in X^{\infty, x_0}\qquad 
G_{\tau_I\wedge \tau_{II}}^{\tau_I, \tau_{II}}(\omega) = G_n^{\tau_I,
    \tau_{II}}(x_0, \ldots, x_n) \quad \mbox{ with } n = (\tau_I\wedge \tau_{II})(\omega)$$
for defining the two value functions:
\begin{equation}\label{val}
u_I(x_0) = \sup_{\sigma_I, \tau_I}\inf_{\sigma_{II}, \tau_{II}}
\mathbb{E}^{x_0}_{\sigma_I, \sigma_{II}, \tau_I\wedge
  \tau_{II}}\big[G^{\tau_I, \tau_{II}}_{\tau_I\wedge\tau_{II}}\big], \qquad 
u_{II}(x_0) = \inf_{\sigma_{II}, \tau_{II}} \sup_{\sigma_I, \tau_I}
\mathbb{E}^{x_0}_{\sigma_I, \sigma_{II}, \tau_I\wedge
  \tau_{II}}\big[G^{\tau_I, \tau_{II}}_{\tau_I\wedge\tau_{II}}\big]
\end{equation}
Note that in view of Lemma \ref{due}, the expectations in (\ref{val}) are well defined.

\medskip

The following is the main result of this section:

\begin{theorem}\label{ExpVal}
Let $\alpha, \beta, F, \Psi_1, \Psi_2$ be as in Theorem \ref{epsilonp}. Then we have:
$$u_{I}= u_{II} = u_\epsilon \qquad \mbox{ in } \Omega$$
where $u_\epsilon$ is the unique solution to (\ref{minmax}).
\end{theorem}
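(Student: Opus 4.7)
My plan is to prove the two-sided comparison $u_I \geq u_\epsilon \geq u_{II}$ in $\Omega$; combined with the tautological minimax inequality $u_I \leq u_{II}$, this yields $u_I = u_{II} = u_\epsilon$. The two bounds are established by symmetric arguments in which one player is equipped with an explicit near-optimal strategy and a stopping rule tied to the corresponding obstacle, producing a super- (resp.\ sub-) martingale from $u_\epsilon$ evaluated along the game history.

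Focus on $u_{II}(x_0)\leq u_\epsilon(x_0)$. Fix $\delta>0$ and, for each $n\geq 0$, apply Lemma \ref{selection} with error $2^{-n-1}\delta$ to obtain a Borel selector $\sigma_{\inf}^n:\Omega\to X$ with $u_\epsilon(\sigma_{\inf}^n(x))\leq \inf_{B_\epsilon(x)}u_\epsilon + 2^{-n-1}\delta$. Declare Player II's strategy to be $\sigma_{II}^n(x_0,\ldots,x_n):=\sigma_{\inf}^n(x_n)$ together with the stopping time $\tau_{II}:=\min\{n\geq 0:\, u_\epsilon(x_n)=\Psi_2(x_n)\}\wedge\tau_0$. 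For arbitrary $\sigma_I,\tau_I$, set $\tau:=\tau_I\wedge\tau_{II}$. On the event $\{\tau>n\}$ one has $x_n\in\Omega$ and $u_\epsilon(x_n)<\Psi_2(x_n)$, so (\ref{minmax}) forces $u_\epsilon(x_n)\geq V(x_n)$ with $V(x_n):=\frac{\alpha}{2}\sup_{B_\epsilon(x_n)}u_\epsilon+\frac{\alpha}{2}\inf_{B_\epsilon(x_n)}u_\epsilon+\beta\fint_{B_\epsilon(x_n)}u_\epsilon$. Lemma \ref{uno} together with the choice of $\sigma_{II}^n$ then gives
$$\mathbb{E}^{x_0}_{\sigma_I,\sigma_{II},\tau}\big[u_\epsilon(x_{n+1}) \mid \mathcal{F}_n^{x_0}\big]\leq V(x_n)+\tfrac{\alpha}{2}2^{-n-1}\delta\leq u_\epsilon(x_n)+\tfrac{\alpha}{2}2^{-n-1}\delta,$$
while on $\{\tau\leq n\}$ the transition (\ref{trans}) reduces to $\delta_{x_n}$ and the process freezes. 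Consequently $M_n:=u_\epsilon(x_{n\wedge\tau})-\frac{\alpha\delta}{2}(1-2^{-(n\wedge\tau)})$ is a genuine supermartingale, and Doob's optional-stopping theorem combined with Lemma \ref{due} (which gives $\tau\leq\tau_0<\infty$ a.s.\ since $\beta>0$) and the boundedness of $u_\epsilon$ yield, via dominated convergence, $\mathbb{E}[u_\epsilon(x_\tau)]\leq u_\epsilon(x_0)+\frac{\alpha\delta}{2}$.

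The final step is to replace $u_\epsilon(x_\tau)$ by the payoff $G^{\tau_I,\tau_{II}}_\tau$ defined in (\ref{ggg}). By cases: on $\{x_\tau\in\Gamma\}$ both quantities equal $F(x_\tau)$; on $\{x_\tau\in\Omega,\ \tau_I<\tau_{II}\}$ the payoff is $\Psi_1(x_\tau)\leq u_\epsilon(x_\tau)$; and on $\{x_\tau\in\Omega,\ \tau_{II}\leq\tau_I\}$ the definition of $\tau_{II}$ forces $u_\epsilon(x_\tau)=\Psi_2(x_\tau)=G^{\tau_I,\tau_{II}}_\tau$. Hence $G^{\tau_I,\tau_{II}}_\tau\leq u_\epsilon(x_\tau)$ pointwise, so $\mathbb{E}[G^{\tau_I,\tau_{II}}_\tau]\leq u_\epsilon(x_0)+\frac{\alpha\delta}{2}$ for every Player I strategy; taking the supremum over $(\sigma_I,\tau_I)$ and then sending $\delta\to 0$ produces $u_{II}(x_0)\leq u_\epsilon(x_0)$. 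The bound $u_I(x_0)\geq u_\epsilon(x_0)$ is proved symmetrically: Player I plays the near-$\sup$ selector from Lemma \ref{selection} and stops at $\tau_I:=\min\{n:u_\epsilon(x_n)=\Psi_1(x_n)\}\wedge\tau_0$, so that on $\{\tau>n\}$ the inequality $u_\epsilon(x_n)>\Psi_1(x_n)$ yields $u_\epsilon(x_n)\leq V(x_n)$, the analogue of $M_n$ becomes a submartingale, and the case analysis reverses to $G^{\tau_I,\tau_{II}}_\tau\geq u_\epsilon(x_\tau)$ pointwise. The main technical obstacle will be the careful bookkeeping of the super-/submartingale property across the three regimes (continuation, obstacle-triggered stopping, exit to $\Gamma$) and the verification of the joint Borel measurability required by Lemma \ref{uno}; once these are in place the remaining optional-stopping and dominated-convergence steps are routine.
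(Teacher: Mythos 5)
Your proposal is correct and follows essentially the same route as the paper: near-optimal Markovian selectors from Lemma \ref{selection} with geometrically decaying errors, the obstacle-triggered stopping times $\tau_{II}$ (resp.\ $\tau_I$), a super- (resp.\ sub-) martingale built from $u_\epsilon$ along the stopped history, Doob's optional stopping plus Lemma \ref{due}, and the same pointwise case analysis comparing $G^{\tau_I,\tau_{II}}_{\tau}$ with $u_\epsilon(x_\tau)$. The only cosmetic difference is your bookkeeping of the error (a subtracted penalty $\frac{\alpha\delta}{2}(1-2^{-(n\wedge\tau)})$ versus the paper's added bonus $\eta 2^{-n}$), which changes nothing of substance.
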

\begin{proof} 
{\bf 1.} We begin by proving that:
\begin{equation}\label{3.8}
u_{II}\leq u \qquad \mbox{ in } \Omega.
\end{equation}
Fix $\eta>0$ and let $\sigma_I$ and $\tau_I$ be any strategy and any
admissible stopping time chosen by Player I. Applying the selection
Lemma \ref{selection}, choose a Markovian strategy 
$\bar\sigma_{II}$ such that $\bar\sigma_{II}^n(x_0,\ldots, x_n) = \bar\sigma_{II}^n(x_n)$ and:
\begin{equation}\label{3.9}
\forall n\geq 0 \quad \forall x_n\in X \qquad
u(\bar\sigma_{II}^n(x_n))\leq \inf_{B_\epsilon(x_n)} u + \frac{\eta}{2^{n+1}}.
\end{equation}
Choose also the stopping time:
$$\bar\tau_{II}(\omega) = \inf\big\{ n \geq 0; ~ u(x_n) = \Psi_2(x_n)
~\mbox{ or }~ x_n\in\Gamma\big\}.$$ 

We will show that the sequence of random variables $\big\{u\circ
x_n+\frac{\eta}{2^{n}}\big\}_{n\geq 0}$ is a supermartingale with respect
to the filtration $\{\mathcal{F}_n^{x_0}\}$.  
Using Lemma \ref{uno} and the condition (\ref{3.9}), we obtain:
\begin{equation}\label{3.4}
\begin{split}
\forall (x_0,\ldots,x_{n-1})\not\in &A_{n-1}^{\tau_I\wedge\bar\tau_{II}}  \qquad
\mathbb{E}_{\sigma_I, \bar\sigma_{II},  \tau_I\wedge\bar\tau_{II}}^{x_0} \Big\{u \circ
x_n + \frac{\eta}{2^{n}}\mid \mathcal{F}_{n-1}^{x_0}\Big\}(x_0,\ldots,x_{n-1})\\
& =\int_{X} u ~\mbox{d}\gamma_{n-1}[x_0,\ldots,x_{n-1}] + \frac{\eta}{2^{n}}\\
& =\frac{\alpha}{2}u(\sigma_I^{n-1}(x_0,\ldots,x_{n-1})) +
\frac{\alpha}{2} u(\sigma_{II}^{n-1}(x_{n-1})) +\beta\fint_{B_\epsilon(x_{n-1})} u + \frac{\eta}{2^{n}} \\
& \leq\frac{\alpha}{2}\sup_{B_\epsilon(x_{n-1})}u +
\frac{\alpha}{2}\inf_{B_\epsilon(x_{n-1})} u +
\beta\fint_{B_\epsilon(x_{n-1})} u + \frac{\eta}{2^{n}}(\frac{\alpha}{2} +1 )\\
& \leq u(x_{n-1})+\frac{\eta}{2^{n-1}} = \big(u\circ x_{n-1} +
\frac{\eta}{2^{n-1}}\big) (x_0,\ldots, x_{n-1}), 
\end{split}
\end{equation}
where the last inequality above follows because: 
$$u(x_{n-1}) \geq \min\Big\{\Psi_2(x_{n-1}), \frac{\alpha}{2}\sup_{B_\epsilon(x_{n-1})}u +
\frac{\alpha}{2}\inf_{B_\epsilon(x_{n-1})} u +
\beta\fint_{B_\epsilon(x_{n-1})} u \Big\}$$
by (\ref{minmax}) and then there must be $ u(x_{n-1}) < \Psi_2(x_{n-1})$ since $(x_0,\ldots,
x_{n-1})\not\in A_{n-1}^{\bar\tau_{II}}$.  On the other hand, when
$(x_0,\ldots, x_{n-1})\in A_{n-1}^{\tau_I\wedge\bar\tau_{II}}$ then we directly get:
$$\mathbb{E}_{\sigma_I, \bar\sigma_{II},  \tau_I\wedge\bar\tau_{II}}^{x_0} \Big\{u \circ
x_n + \frac{\eta}{2^{n}}\mid
\mathcal{F}_{n-1}^{x_0}\Big\}(x_0,\ldots,x_{n-1}) = u(x_{n-1})+\frac{\eta}{2^{n}}. $$

By Doob's optional stopping theorem \cite{Var} applied to the uniformly
bounded random variables $\displaystyle{\Big\{u\circ x_{\tau_I\wedge\bar\tau_{II}\wedge
n} + \frac{\eta}{2^{\tau_I\wedge\bar\tau_{II}\wedge n}}\Big\}_{n\geq  0}}$, we obtain:
$$\mathbb{E}_{\sigma_I, \bar\sigma_{II},  \tau_I\wedge\bar\tau_{II}}\Big[u \circ x_{\tau_I\wedge\bar\tau_{II}}
+\frac{\eta}{2^{\tau_I\wedge\bar\tau_{II}}}\Big] \leq 
\mathbb{E}_{\sigma_I, \bar\sigma_{II},  \tau_I\wedge\bar\tau_{II}}\Big[u \circ x_0
+\frac{\eta}{2^0}\Big] = u(x_0) +\eta.$$
Consequently:
\begin{equation}\label{mart}
\begin{split}
u_{II}(x_0) & \leq \sup_{\sigma_I, \tau_I}\mathbb{E}_{\sigma_I,
  \bar\sigma_{II},  \tau_I\wedge\bar\tau_{II}}\left[G_{\tau_I\wedge\bar\tau_{II}}^{\tau_I,
  \bar\tau_{II}} + \frac{\eta}{2^{\tau_I\wedge\bar\tau_{II}}}\right]
\\ & \leq  \sup_{\sigma_I, \tau_I}\mathbb{E}_{\sigma_I,
  \bar\sigma_{II},  \tau_I\wedge\bar\tau_{II}}\left[u\circ x_{\tau_I\wedge\bar\tau_{II}}^{\tau_I,
  \bar\tau_{II}} + \frac{\eta}{2^{\tau_I\wedge\bar\tau_{II}}}\right]\leq u(x_0)+\eta,
\end{split}
\end{equation}
because for a given $\omega\in X^{\infty, x_0}$ such that
$n=(\tau_I\wedge \bar\tau_{II})(\omega)<+\infty$ there holds:
$$G_{\tau_I\wedge\bar\tau_{II}}^{\tau_I,  \bar\tau_{II}}(\omega) = G_n^{\tau_I,
  \bar\tau_{II}}(x_0, \ldots, x_n)\leq u(x_n).$$
The above inequality may be checked directly from the definition
(\ref{ggg}). For example, when $G_{\tau_I\wedge\bar\tau_{II}}^{\tau_I,
  \bar\tau_{II}}(\omega) = \Psi_2(x_n)$ then there must be
$x_n\in\Omega$ and $n=\bar\tau_{II}(\omega)\leq\tau_{I}(\omega)$, so
$u(x_n) = \Psi_2(x_n)$. This completes the proof of (\ref{3.8})
because $\eta>0$ was arbitrarily small.
	
\smallskip

{\bf 2.} Using the same reasoning as above, we now prove the second inequality: 
\begin{equation}\label{3.88}
u\leq u_{I} \qquad \mbox{ in } \Omega.
\end{equation}
Fix $\eta>0$ and let $\sigma_{II}$ and $\tau_{II}$ be any strategy and
an admissible stopping time for Player II. By Lemma \ref{selection}, we
choose a strategy $\bar\sigma_{I}$ so that $\bar\sigma_{I}^n(x_0,\ldots,x_n) = \bar\sigma_{I}^n(x_n)$
and:
$$\forall n\geq 0\quad \forall x_n\in X\qquad u(\bar\sigma_{I}^n(x_n))\geq \sup_{B_{\epsilon(x_n)}} u
-\frac{\eta}{2^{n+1}}.$$
We define the stopping time:
$$\bar\tau_{I}(\omega) = \inf\big\{ n \geq 0; ~ u(x_n) = \Psi_1(x_n)
~\mbox{ or }~ x_n\in\Gamma\big\}.$$ 
The sequence of random variables $\big\{u\circ x_n-\frac{\eta}{2^{n}}\big\}_{n\geq 0}$ is a submartingale 
with respect to the filtration $\{\mathcal{F}_n^{x_0}\}$. For the
proof, we reason as in (\ref{3.4}) and noting that for $(x_0,\ldots,
x_{n-1})\not\in A_{n-1}^{\bar\tau_I\wedge\tau_{II}}$ we have:
$u(x_{n-1}) > \Psi_1(x_{n-1})$, so by (\ref{minmax}) there must be:
$$u(x_{n-1}) \leq \frac{\alpha}{2}\sup_{B_\epsilon(x_{n-1})}u +
\frac{\alpha}{2}\inf_{B_\epsilon(x_{n-1})} u + \beta\fint_{B_\epsilon(x_{n-1})} u.$$
Further, using the same arguments as in (\ref{mart}), we obtain:
$$u_{I}(x_0) \geq u(x_0) - \eta,$$
where we used that $G_{\bar\tau_I\wedge\tau_{II}}^{\bar\tau_I, \tau_{II}}(\omega) \geq u(x_n)$
with $n= (\bar\tau_I\wedge\tau_{II})(\omega)$, for
$\mathbb{P}^\infty_{\bar\sigma_I, \sigma_{II},  \bar\tau_I\wedge\tau_{II}}$-almost every
$\omega \in X^{\infty, x_0}$.
Since $\eta>0$ was arbitrary, we indeed conclude (\ref{3.88}).
\end{proof}

\section{Numerical approximations of solutions to  (\ref{obst})}\label{secnum}

The approximation construction utilized in Theorem \ref{main} lends
itself very well to numerical use. Below, we set up a discretization of the
operator $T$ in (\ref{2.2}) and use it for approximating the solutions to (\ref{obst}).
	
\medskip

\noindent {\bf The algorithm.}
We consider the square domain $\Omega$ and the extended domain $X$:
$$\Omega=(-1,1)\times(-1,1)\subset \R^2, \qquad
X=\Omega\cup\Gamma=(-1.2,1.2)\times(-1.2,1.2),$$ 
where we set $\bar\epsilon_0=0.2$. A square mesh is
created in $X$ and we define the two initial iteration functions
$u_1^-$ and $u_1^+$ as equal to the lower and upper obstacle $\Psi_1$,
$\Psi_2$, respectively, on the mesh nodes $\Omega$ and both
equal to the boundary value $F$ on the mesh nodes in $\Gamma$. 

A discrete version $\bar T$ of the operator $T$ is defined as follows. 
Fix $\epsilon<\bar\epsilon_0$. Given a function $v$ on the nodes of
the mesh,  for every node $p\in\Omega$ we take all the nodes
$\{p_1,...,p_k\}$ in $X$ within $\epsilon$ distance of $p$ and evaluate: 
$$\bar{v}(p)=\frac{\alpha}{2}\max_{j=1\ldots k} v(p_j) +
\frac{\alpha}{2}\min_{j=1\ldots k} v(p_j) + \frac{\beta}{k}\sum\limits_{j=1}^{k}v(p_j).$$ 
The choice of $\epsilon$ affects the approximation and the speed of the
algorithm. We now set $\bar{T}v=v$ for nodes in $\Gamma$, while
for nodes $p\in\Omega$ we take:
\begin{equation} \label{DiscOper}
\bar{T}v(p)=\max\big\{u_1^-(p),\min\left\{u_1^+(p), \bar v(p)\right\}\big\}.
\end{equation}
The operator $\bar{T}$ is iterated to get two sequences of
functions: and increasing sequence $u_{n+1}^- = T(u_n^-)$ and a
decreasing sequence $u_{n+1}^+=T(u_n^+)$. We evaluate the maximum
difference between $u_n^-$ and $u_n^+$ and when it is less than the required accuracy, we break
the algorithm and return the values of $\frac{1}{2}(u_n^- + u_n^+)$ as the solution.  
		
\medskip

\noindent {\bf The resulting approximations of (\ref{obst}).} In
Figure \ref{fig1} we show the computed solutions for $p=2$ and
$p=100$, the boundary data $F=0$ and the obstacles:
\begin{equation}\label{try1}
\begin{split}
\Psi_1(x,y) &
=\max\Big\{1-33(x+0.5)^2-27(y+0.1)^2, ~ 0.5-40(x+0.3)^2-34(y+0.4)^2,
\\ & \qquad\qquad \qquad ~
0.5-36(x-0.6)^2-51(y-0.7)^2,~-2\Big\},\\ 
\Psi_2(x,y) &
=\min\Big\{33(x+0.6)^2+27(y-0.6)^2-1, ~33(x-0.6)^2+27(y+0.6)^2-1, ~2\Big\}.
\end{split}
\end{equation}

\begin{figure}[htp]
\centering
\hspace{-0.5cm}\includegraphics[width=9cm]{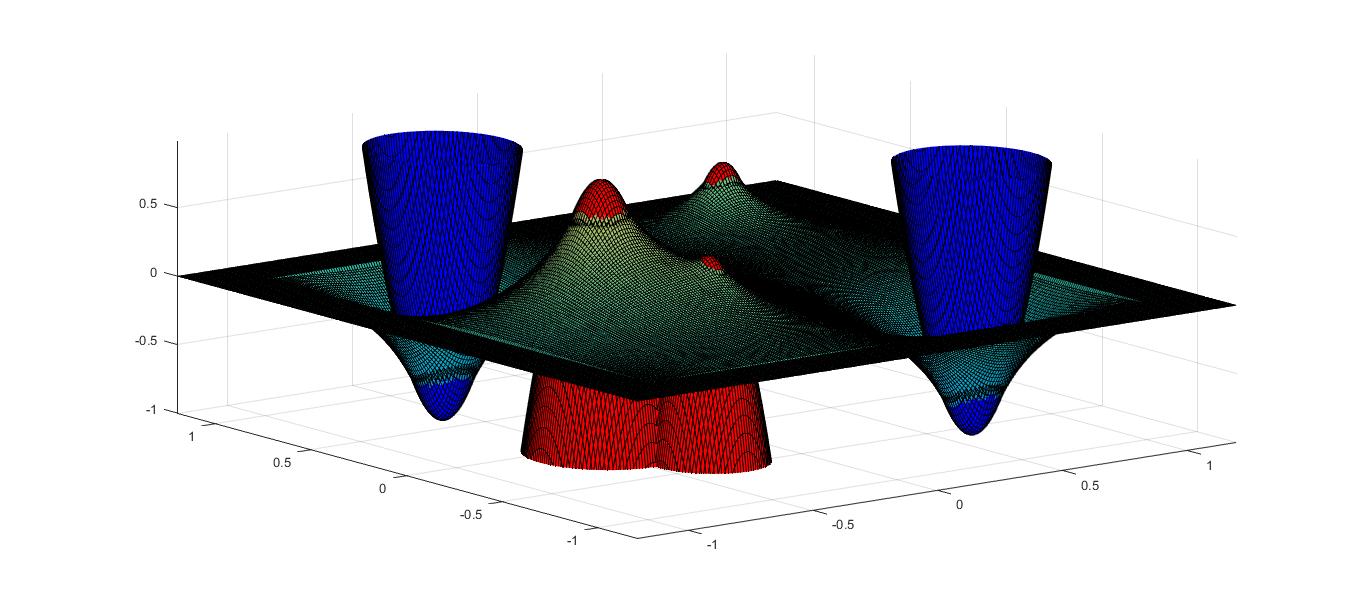}
\hspace{-1.5cm}
\includegraphics[width=9cm]{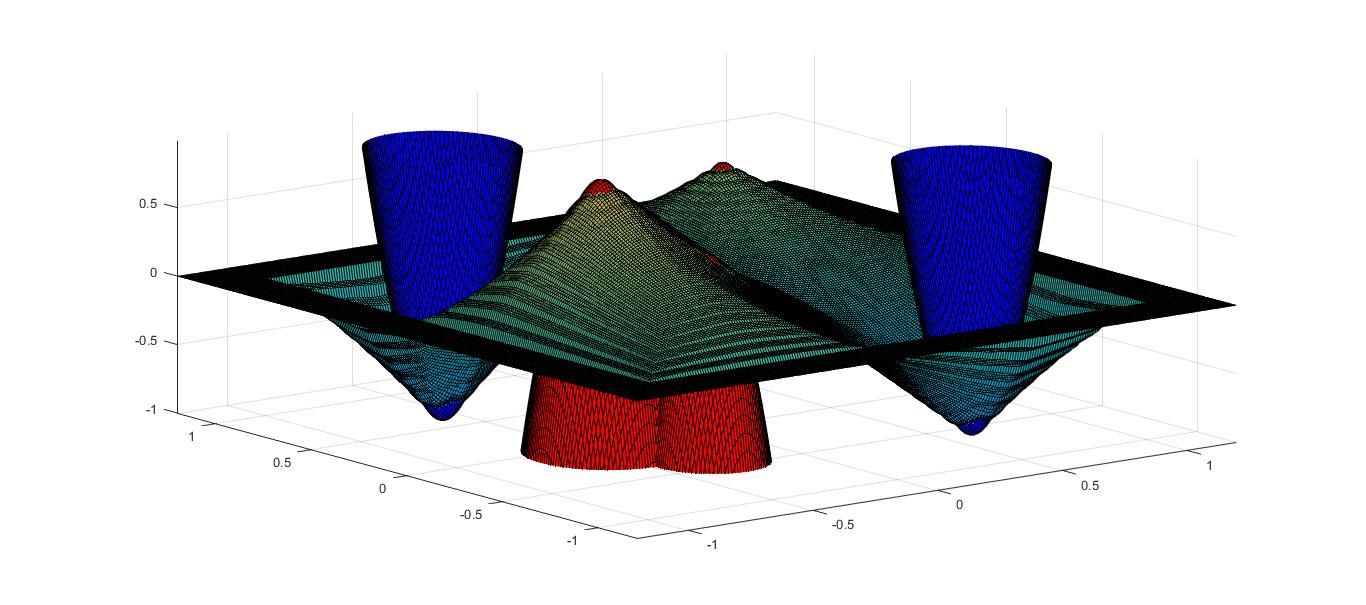}\\
\caption{Results of tests for $p=2$ and $p=100$ with data in (\ref{try1})} \label{fig1}
\end{figure}

\medskip

In Figure \ref{fig2} we show the computed solutions for $p=10$ and the
following sets of data: 

\medskip

(a) Smooth obstacles with parabolic boundary condition:
\begin{equation*}
\begin{split}
\Psi_1(x,y) & =\max\Big\{2-33(x+0.5)^2-27(y+0.1)^2, ~
1.5-40(x+0.3)^2-34(y+0.4)^2, \\ & \qquad\qquad\qquad ~ 2.5-36(x-0.6)^2-51(y-0.7)^2, ~-3\Big\},\\
\Psi_2(x,y) & =\min\Big\{33(x+0.6)^2+27(y-0.6)^2-3, ~
33(x-0.6)^2+27(y+0.6)^2-3, ~ 3\Big\},\\
F(x,y) & = 1-2y^2.
\end{split}
\end{equation*}

\medskip

(b) Lipschitz obstacles with zero boundary condition:
\begin{equation*}
\begin{split}
\Psi_1(x,y) & = \begin{cases}		
2-17|x-0.5|  & \quad \mbox{ for } y\in \left[-0.5,0.5\right]\\
2-17|x-0.5| - 17|y+0.5| &  \quad \mbox{ for } y\in (-1,-0.5)\\
2-17|x-0.5| - 17|y-0.5|  &\quad \mbox{ for } y\in (0.5,1)
\end{cases}\\
\Psi_2(x,y) & =-4+12|y+0.2|+15|x-0.7|\\
F(x,y) & =0.
\end{split}
\end{equation*}

\medskip

(c) Smooth obstacles with hyperbolic boundary condition, where
$\Psi_1$ and $\Psi_2$ are as in (a), and:
$$F(x,y) = 2-(x+y)^2.$$

\begin{figure}[htp]
\centering
\hspace{-0.5cm}\includegraphics[width=9cm]{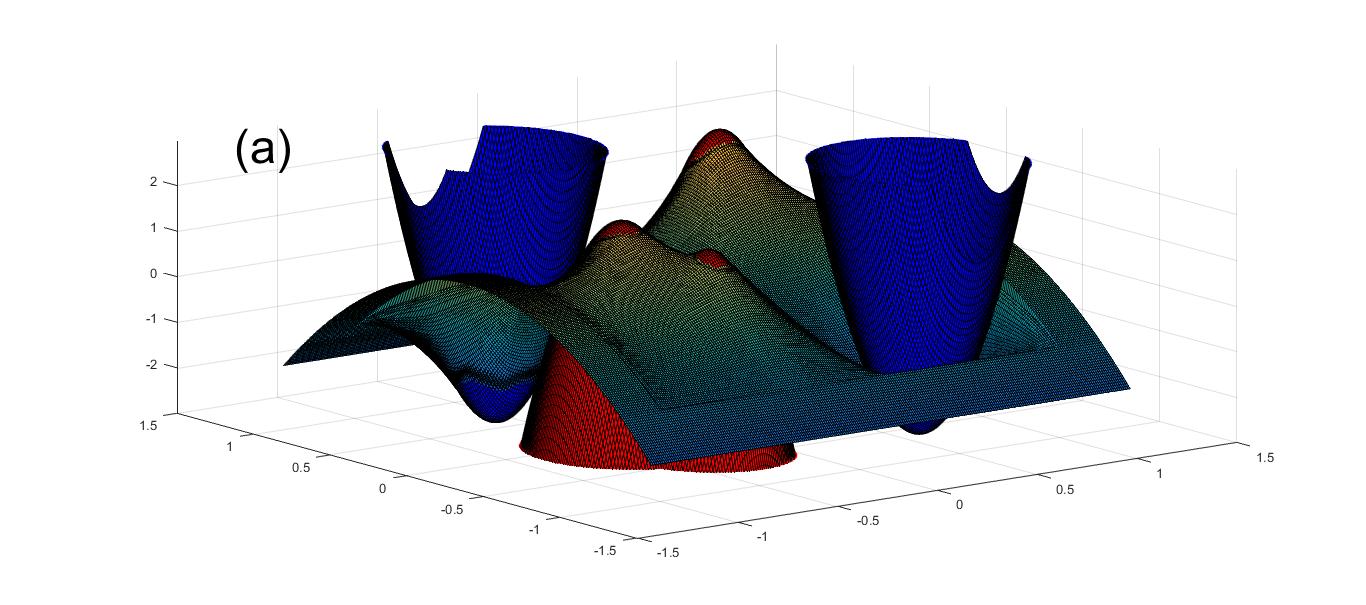}
\hspace{-1.5cm}
\includegraphics[width=9cm]{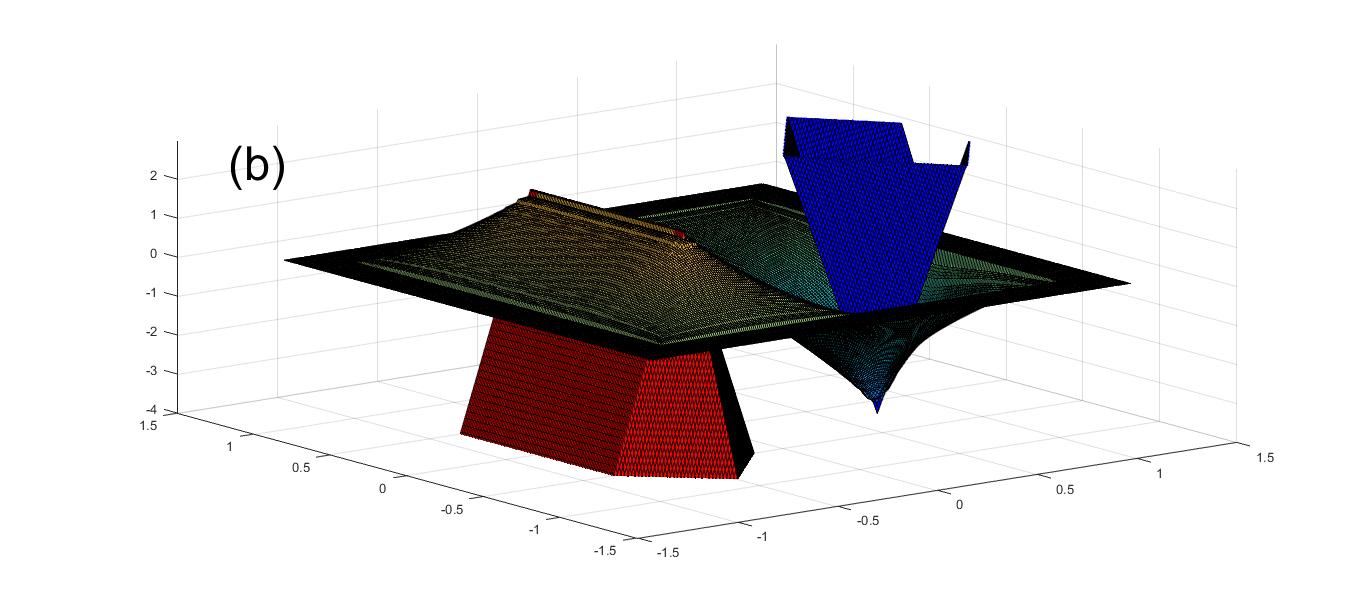}\\
\hspace{-0.5cm}\includegraphics[width=9cm]{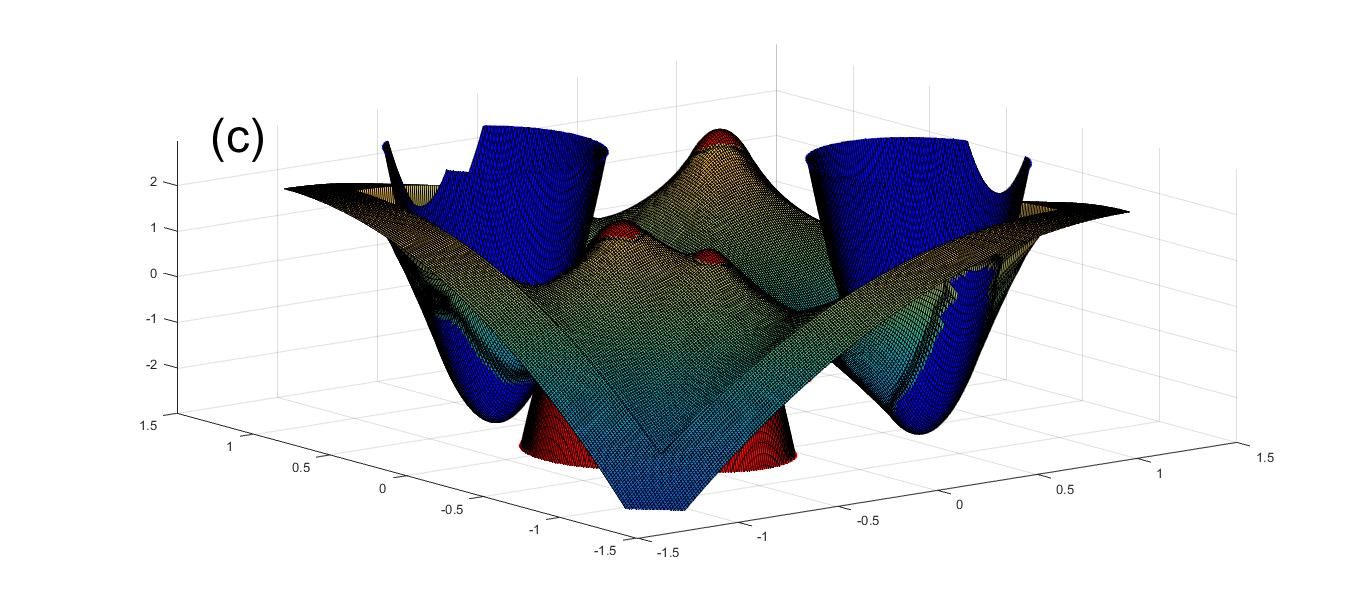}
\caption{Results of tests for $p=10$ and data in (a), (b), (c), respectively.} \label{fig2}
\end{figure}

\medskip

\noindent {\bf The choice of radius and performance.}
We ran tests with the radius $\epsilon$ corresponding to $15$, $10$,
$5$ and $3$ mesh size, for eighteen boundary conditions and obstacle
functions. The table below gathers the information on the
obtained execution time and precision.  
Runtime denotes the average time in seconds it took to
run the experiments for a given radius. Iteration No
denotes the number of times the operator $\bar T$ was applied before obtaining
precision of less than $10^{-3}$. Error 1 is the error measured in
the problem whose known solution is $e^x\sin(y)$ and $p=25$. Error 2 is the error
measured in the problem whose solution is $x^2-y^2-y$ with no
obstacles and $p=2$. 

\bigskip
	
\begin{tabular}{|c|c|c|c|c|c|}
\hline
Radius&$k$ = Points Sampled&Runtime&Iteration No.&Error 1&Error 2\\ \hline
15&709&555&335&$8.62\cdot 10^{-6}$&$8.22\cdot 10^{-11}$\\ \hline
10&317&617&876&$6.17\cdot 10^{-6}$&$8.51\cdot 10^{-11}$\\ \hline
5&81&652&3361&$2.68\cdot 10^{-6}$&$8.68\cdot 10^{-11}$\\ \hline
3&29&540&9255&$3.15\cdot 10^{-7}$&$4.73\cdot 10^{-7}$\\ \hline
\end{tabular}

\bigskip\bigskip

Next we look at how the algorithm performs for different values of
$p$. We ran the algorithm with six different boundary conditions with
no obstacle, one obstacle and two obstacles, each time for 
values $p= 2,3,4,5,10,25,50,100$. The larger the value of $p$, the
faster the algorithm converged as can be seen in the following
table. Each row measures how many iterations it took for the algorithm
to produce a precision of $10^{-3}$ on average over the six boundary
conditions. 
	
\bigskip

\begin{center}
\begin{tabular}{|c|c|c|c|c|c|c|c|}
\hline
$p$&3&4&5&10&25&50&100\\ \hline
No Obstacle&5180&4806&4569&3790&3003&2707&209\\ \hline
One Obstacle&1637&1392&1249&975&825&777&166\\ \hline
Two Obstacles&1842&1933&1366&1108&992&967&178\\ \hline
\end{tabular}
\end{center}

\bigskip\bigskip

\end{document}